\numberwithin{figure}{section}
\theoremstyle{plain}
\newtheorem{thm}{\protect\theoremname}
  \theoremstyle{plain}
  \newtheorem{lem}[thm]{\protect\lemmaname}
  \providecommand{\lemmaname}{Lemma}
\providecommand{\theoremname}{Theorem}
\begin{document}
\pagestyle{plain}

\title{ON THE CES\`ARO AVERAGE OF THE NUMBERS THAT CAN BE WRITTEN AS SUM OF
A PRIME AND TWO SQUARES OF PRIMES}

\author{Marco Cantarini}
\begin{abstract}
Let $\Lambda\left(n\right)$ be the Von Mangoldt function and $r_{SP}\left(n\right)=\sum_{m_{1}+m_{2}^{2}+m_{3}^{2}=n}\Lambda\left(m_{1}\right)\Lambda\left(m_{2}\right)\Lambda\left(m_{3}\right)$
be the counting function for the numbers that can be written as sum
of a prime and two squares. Let $N$ be a sufficiently large integer.
We prove that 
\[
\sum_{n\leq N}r_{SP}\left(n\right)\frac{\left(N-n\right)^{k}}{\Gamma\left(k+1\right)}=\frac{N^{k+2}\pi}{4\Gamma\left(k+3\right)}+E\left(N,k\right)
\]
for $k>3/2$, where $E\left(N,k\right)$ consists of lower order terms
that are given in terms of $k$ and sum over the non-trivial zeros
of the Riemann zeta function.
\end{abstract}

\maketitle

\section{Introduction}

We continue the recent work of Languasco, Zaccagnini and the author
on additive problems with prime summands. In \cite{langzac1} and
\cite{langzac2} Languasco and Zaccagnini study the Ces\`aro weighted
explicit formula for the Goldbach numbers (the integers that can be
written as sum of two primes) and for the Hardy-Littlewood numbers
(the integers that can be written as sum of a prime and a square).
Recently \cite{Canta} the author wrote a paper regarding the Ces\`aro
average of the integers that can be written as sum of a prime and
two squares. In a similar manner, we will study a Ces\`aro weighted
explicit formula for the integers that can be written as sum of a
prime and two squares of primes. We will obtain an asymptotic formula
with a main term and more terms depending explicitly on the zeros
of the Riemann zeta function. This technique allow us to obtain a
large number of terms in our asymptotic but unfortunately the bound
$k>3/2$ seems to be very difficult to improve. We recall that, for
$k=0$, the Ces\`aro weights vanish so a result for $k\geq0$ would
allow us to get an asymptotic for the mean of $r_{SP}\left(n\right).$
\[
\,
\]

\subjclass 2010 Mathematics Subject Classification:{ Primary 11P32; Secondary 44A10, 33C10}

\keywords{Key words and phrases: Goldbach-type theorems, Laplace transforms, Ces\`aro average.}

\newpage{}

\begin{flushleft}
We let
\[
r_{SP}\left(n\right)=\sum_{m_{1}+m_{2}^{2}+m_{3}^{2}=n}\Lambda\left(m_{1}\right)\Lambda\left(m_{2}\right)\Lambda\left(m_{3}\right)
\]
where $\Lambda\left(n\right)$ is the Von Mangoldt function and 
\begin{align}
M_{1}\left(N,k\right)= & \frac{N^{k+2}\pi}{4\Gamma\left(k+3\right)}\label{eq:M1}\\
M_{2}\left(N,k\right)= & \frac{N^{k+1}\pi}{4}\sum_{\rho}\frac{N^{\rho}\Gamma\left(\rho\right)}{\Gamma\left(k+2+\rho\right)}-\frac{N^{k+3/2}\sqrt{\pi}}{2}\sum_{\rho}N^{\rho/2}\frac{\Gamma\left(\rho/2\right)}{\Gamma\left(k+5/2+\rho/2\right)}\label{eq:M2}\\
M_{3}\left(N,k\right)= & \frac{N^{k+1/2}\sqrt{\pi}}{2}\sum_{\rho_{1}}\sum_{\rho_{2}}N^{\rho_{1}+\rho_{2}/2}\frac{\Gamma\left(\rho_{1}\right)\Gamma\left(\frac{\rho_{2}}{2}\right)}{\Gamma\left(k+3/2+\rho_{1}+\rho_{2}/2\right)}\nonumber \\
+ & \frac{N^{k+1}}{4}\sum_{\rho_{1}}\sum_{\rho_{2}}N^{\rho_{1}/2+\rho_{2}/2}\frac{\Gamma\left(\frac{\rho_{1}}{2}\right)\Gamma\left(\frac{\rho_{2}}{2}\right)}{\Gamma\left(k+2+\rho_{1}/2+\rho_{2}/2\right)}\label{eq:M3}\\
M_{4}\left(N,k\right)= & \frac{N^{k}}{4}\sum_{\rho_{1}}\sum_{\rho_{2}}\sum_{\rho_{3}}\frac{N^{\rho_{1}+\rho_{2}/2+\rho_{3}/2}\Gamma\left(\rho_{1}\right)\Gamma\left(\frac{\rho_{2}}{2}\right)\Gamma\left(\frac{\rho_{3}}{2}\right)}{\Gamma\left(k+\rho_{1}+\rho_{2}/2+\rho_{3}/2\right)}.\label{eq:M4}
\end{align}
The main result of this paper is the following
\par\end{flushleft}
\begin{thm}
Let $N$ be a sufficient large integer. We have
\[
\sum_{n\leq N}r_{SP}\left(n\right)\frac{\left(N-n\right)^{k}}{\Gamma\left(k+1\right)}=M_{1}\left(N,k\right)+M_{2}\left(N,k\right)+M_{3}\left(N,k\right)+M_{4}\left(N,k\right)+O\left(N^{k+1}\right)
\]
for $k>3/2$, where $\rho=\beta+i\gamma$, with or without subscripts,
runs over the non-trivial zeros of the Riemann zeta function $\zeta\left(s\right)$. 
\end{thm}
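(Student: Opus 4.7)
The plan is to adapt the Laplace--transform technique developed by Languasco--Zaccagnini in \cite{langzac1}, \cite{langzac2} and used by the author in \cite{Canta}. Setting
\[
\widetilde{S}(s)=\sum_{n\ge1}\Lambda(n)e^{-ns},\qquad \widetilde{S}_{2}(s)=\sum_{n\ge1}\Lambda(n)e^{-n^{2}s},
\]
we have $\widetilde{S}(s)\widetilde{S}_{2}(s)^{2}=\sum_{n\ge1}r_{SP}(n)e^{-ns}$ for $\operatorname{Re}(s)>0$. Combining this with the inverse Laplace identity $\frac{1}{2\pi i}\int_{(a)}e^{(N-n)s}s^{-k-1}ds=(N-n)_{+}^{k}/\Gamma(k+1)$ produces the starting point
\[
\sum_{n\le N}r_{SP}(n)\frac{(N-n)^{k}}{\Gamma(k+1)}=\frac{1}{2\pi i}\int_{(a)}e^{Ns}\widetilde{S}(s)\widetilde{S}_{2}(s)^{2}\frac{ds}{s^{k+1}},\qquad a>0,
\]
which is the integral representation for the whole subsequent analysis.

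Next I would expand each factor of the integrand near $s=0$ via Mellin--Barnes. From $\widetilde{S}(s)=\frac{1}{2\pi i}\int_{(c)}\Gamma(w)(-\zeta'/\zeta)(w)s^{-w}dw$ and the analogous formula for $\widetilde{S}_{2}$ (with $2w$ in place of $w$, producing an overall factor $\tfrac12$), shifting the $w$-contour to the left past the pole of $\zeta$ and the non-trivial zeros yields
\[
\widetilde{S}(s)=\frac{1}{s}-\sum_{\rho}\Gamma(\rho)s^{-\rho}+\mathcal{E}_{1}(s),\qquad \widetilde{S}_{2}(s)=\frac{\sqrt{\pi}}{2\sqrt{s}}-\frac{1}{2}\sum_{\rho}\Gamma(\rho/2)s^{-\rho/2}+\mathcal{E}_{2}(s),
\]
with ``regular'' remainders $\mathcal{E}_{j}$ coming from the shifted integrals. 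Multiplying out the three factors yields $3^{3}=27$ pieces, which I would sort by the number of zero-sums they contain ($0$, $1$, $2$, $3$) and by whether they carry some $\mathcal{E}_{j}$. Integrating each purely singular piece term-by-term against $e^{Ns}s^{-k-1}$ via the Hankel-type formula $\frac{1}{2\pi i}\int_{(a)}e^{Ns}s^{-\alpha}ds=N^{\alpha-1}/\Gamma(\alpha)$ then produces $M_{1}$ (the one pure-main piece), $M_{2}$ (three one-zero-sum pieces), $M_{3}$ (three two-zero-sum pieces), and $M_{4}$ (the single three-zero-sum piece), matching the formulas \eqref{eq:M1}--\eqref{eq:M4}. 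Every piece containing at least one factor $\mathcal{E}_{j}$ is destined for the remainder.

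The main obstacle is to prove absolute convergence of the triple sum $M_{4}$ and to bound all $\mathcal{E}_{j}$-containing pieces by $O(N^{k+1})$. Stirling's estimate $|\Gamma(\sigma+it)|\asymp|t|^{\sigma-1/2}e^{-\pi|t|/2}$ combined with the Riemann--von Mangoldt density $N(T)\ll T\log T$ supplies the two basic tools; in the worst configuration the imaginary parts $\gamma_{1},\gamma_{2},\gamma_{3}$ share the same sign, so that the exponentials from $\Gamma(\rho_{1})\Gamma(\rho_{2}/2)\Gamma(\rho_{3}/2)$ cancel exactly the exponential from $\Gamma(k+\rho_{1}+\rho_{2}/2+\rho_{3}/2)$, leaving only an algebraic factor in $|\gamma_{1}|,|\gamma_{2}|,|\gamma_{3}|$ whose resulting triple series is summable precisely when $k>3/2$. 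The same threshold governs the error analysis: bounding $\mathcal{E}_{j}$ on the line $\operatorname{Re}(s)=1/N$ using a standard zero-free region and combining it with the already established bounds for the singular parts of $\widetilde{S}$ and $\widetilde{S}_{2}$ confines every residual contribution to $O(N^{k+1})$ in the same range of $k$. Finally, I would justify each contour deformation by the exponential decay of $\Gamma$ along horizontal lines $\operatorname{Im}(s)=\pm T$, and justify the interchange of the $s$-integral with the sums over zeros by dominated convergence, both valid under $k>3/2$.
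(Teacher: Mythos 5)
Your proposal follows essentially the same route as the paper: the Laplace-transform identity to write the Ces\`aro sum as $\frac{1}{2\pi i}\int_{(a)}e^{Nz}z^{-k-1}\widetilde{S}_1(z)\widetilde{S}_2^2(z)\,dz$, the explicit-formula (Mellin--Barnes) expansion of $\widetilde{S}_1$ and $\widetilde{S}_2$, sorting the resulting pieces by the number of zero-sums, evaluating the pure pieces via $\frac{1}{2\pi i}\int_{(a)}e^{Nz}z^{-\alpha}\,dz=N^{\alpha-1}/\Gamma(\alpha)$, and then absorbing the error-bearing pieces into $O(N^{k+1})$ using Stirling and Riemann--von Mangoldt, with the worst-case configuration of aligned signs giving the $k>3/2$ threshold. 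One minor imprecision: the bound on the remainder $\mathcal{E}_\ell$ on $\operatorname{Re}(z)=1/N$ does not come from a zero-free region but from the shifted contour at $\operatorname{Re}(w)=-1/2$ together with $|\zeta'/\zeta(w)|\ll\log(|\operatorname{Im} w|+2)$ there, and the bulk of the paper's work consists of the careful sign-by-sign case analysis (the paper's Lemmas 4--6 and Sections 6--8) that your proposal compresses into one sentence about dominated convergence.
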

Note that an upper bound for $M_{i}\left(N,k\right),\,i=2,\dots,4$
depends closely on $\beta$. Let us define
\[
\overline{\beta}:=\sup\left\{ \beta:\,\textrm{Re}\left(\rho\right)=\beta\right\} .
\]
 We have that 
\[
M_{2}\left(N,k\right)\ll_{k}N^{k+3/2+\overline{\beta}/2}
\]
\[
M_{3}\left(N,k\right)\ll_{k}N^{k+1+\overline{\beta}}
\]
\[
M_{4}\left(N,k\right)\ll_{k}N^{k+2\overline{\beta}}.
\]
Note also that, if the Riemann hypothesis is true, then $M_{4}\left(N,k\right)$
can be incorporated in the error term. The problem of representing
an integer as sum of a prime and two prime squares is classical. Let
\[
A=\left\{ n\in\mathbb{N}:\,n\equiv1\,\mod\,2;\,n\not\equiv2\,\mod\,3\right\} ;
\]
 it is conjectured that every sufficiently large natural number $n\in A$
is a sum of a prime and two prime squares. Many authors studied the
cardinality $E\left(N\right)$ of the set of integers $n\leq N$,
$n\in A$ that are not representable as a sum of prime and two squares
of primes. We recall Hua \cite{hua}, Schwarz \cite{schwarz}, Leung-Liu
\cite{LeuLiu}, Wang \cite{wang}, Wang-Meng \cite{wangmeng}, Li
\cite{li}, Harman-Kumchev \cite{harmkum}. Zhao \cite{zhao} proved
that
\[
E\left(N\right)\ll N^{1/3+\epsilon}
\]
and so every integer $n\in\left[1,N\right]\cap A$, with at most $O\left(N^{1/3+\epsilon}\right)$
exceptions, is a sum of a prime and two squares of primes. Letting
\[
r\left(n\right):=\sum_{p_{1}+p_{2}^{2}+p_{3}^{2}=n}\log\left(p_{1}\right)\log\left(p_{2}\right)\log\left(p_{3}\right)
\]
Zhao also proved that an asymptotic formula for $r\left(n\right)$
holds for $n\in\left[1,N\right]\cap A$, with at most $O\left(N^{1/3+\epsilon}\right)$
exceptions. Similar averages of arithmetical functions are common
in literature, see, e.g., Chandrasekharan - Narasimhan \cite{ChaNa}
and Berndt \cite{Berndt} who built on earlier classical work. The
method we will use in this additive problem is based on a formula
due to Laplace \cite{Lap}, namely
\begin{equation}
\frac{1}{2\pi i}\int_{\left(a\right)}v^{-s}e^{v}dv=\frac{1}{\Gamma\left(s\right)}\label{eq:Lap}
\end{equation}
with $\textrm{Re}\left(s\right)>0$ and $a>0$ (see, e.g., formula
5.4 (1) on page 238 of \cite{Erd}), where the notation $\int_{\left(a\right)}$
means $\int_{a-i\infty}^{a+i\infty}$. As in \cite{langzac2}, we
combine this approach with line integrals with the classical methods
dealing with infinite sum over primes and integers.

I thank A. Zaccagnini and A. Languasco for their contributions and
the conversations on this topic. I also thank the referee, who pointed
out further inaccuracies and suggested improvements in the presentation.
This work is part of the Author's Ph.D. thesis.

\section{Preliminary definitions and Lemmas}

Let $z=a+iy$, $a>0$, let
\begin{align}
\widetilde{S}_{1}\left(z\right)= & \sum_{m\geq1}\Lambda\left(m\right)e^{-mz}\label{eq:stild}\\
\widetilde{S}_{2}\left(z\right)= & \sum_{m\geq1}\Lambda\left(m\right)e^{-m^{2}z}\label{eq:stild2}
\end{align}
and let us introduce the following
\begin{lem}
Let $z=a+iy,\,a>0$ and $y\in\mathbb{R}$. Then
\begin{equation}
\widetilde{S}_{1}\left(z\right)=\frac{1}{z}-\sum_{\rho}z^{-\rho}\Gamma\left(\rho\right)+E\left(a,y\right)\label{eq:stilda}
\end{equation}
where $\rho=\beta+i\gamma$ runs over the non-trivial zeros of $\zeta\left(s\right)$
and
\begin{equation}
E\left(a,y\right)\ll\left|z\right|^{1/2}\begin{cases}
1, & \left|y\right|\leq a\\
1+\log^{2}\left(\left|y\right|/a\right), & \left|y\right|>a.
\end{cases}\label{eq:error}
\end{equation}
\end{lem}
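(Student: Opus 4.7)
The plan is a contour integral argument. The starting point is to rewrite the Laplace identity \eqref{eq:Lap} in the equivalent Mellin--Barnes form
\[
e^{-mz}=\frac{1}{2\pi i}\int_{(c)}\Gamma(s)(mz)^{-s}\,ds, \qquad c>0,\ \mathrm{Re}(z)>0,
\]
and substitute it into the series \eqref{eq:stild}. For $c>1$ we have $\sum_m\Lambda(m)m^{-c}<\infty$, and $\Gamma$ decays exponentially on vertical lines, so Fubini legitimates the interchange of summation and integration, producing the identity
\[
\widetilde{S}_1(z)=-\frac{1}{2\pi i}\int_{(c)}\Gamma(s)\,z^{-s}\,\frac{\zeta'(s)}{\zeta(s)}\,ds.
\]

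Next I would shift the contour leftward from $\mathrm{Re}(s)=c$ to $\mathrm{Re}(s)=-1/2$. By the residue theorem this picks up three kinds of contributions: the simple pole of $-\zeta'/\zeta$ at $s=1$ yields $\Gamma(1)z^{-1}=1/z$; each simple zero $s=\rho$ yields $-\Gamma(\rho)z^{-\rho}$ (higher-order zeros giving the obvious multiplicities); and the simple pole of $\Gamma$ at $s=0$ produces the bounded constant $-\zeta'(0)/\zeta(0)=-\log(2\pi)$, which is swept into $E(a,y)$. To close the contour rigorously I would use horizontal segments at $\mathrm{Im}(s)=\pm T_j$, with $T_j\to\infty$ chosen by the usual Riemann--von Mangoldt argument to stay a fixed positive distance from every ordinate of a non-trivial zero; Stirling's exponential decay of $|\Gamma|$ in the imaginary direction combined with $|\zeta'/\zeta(\sigma+iT_j)|\ll\log^2 T_j$ then forces the horizontal pieces to vanish in the limit.

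The heart of the lemma is the estimation of the remaining integral on the shifted line. Writing $s=-1/2+it$ one has $|z^{-s}|=|z|^{1/2}\exp(t\,\arg z)$ with $|\arg z|<\pi/2$, Stirling yields $|\Gamma(-1/2+it)|\ll(1+|t|)^{-1}e^{-\pi|t|/2}$, and the functional equation for $\zeta$ together with the standard critical-strip bound give $|\zeta'/\zeta(-1/2+it)|\ll\log(2+|t|)$. Consequently, the integrand is dominated by
\[
|z|^{1/2}\,\frac{\log(2+|t|)}{1+|t|}\,\exp\!\bigl(t\arg z-\tfrac{\pi}{2}|t|\bigr).
\]
The exponential factor is $O(1)$ only while $|t|\lesssim|y|/a$ and decays genuinely beyond that threshold. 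Splitting the $t$-integral at $|t|\approx|y|/a$ and integrating $\log(2+|t|)/(1+|t|)$ over the central range reproduces exactly the piecewise bound \eqref{eq:error}: $O(|z|^{1/2})$ when $|y|\leq a$ and $O(|z|^{1/2}\log^2(|y|/a))$ when $|y|>a$.

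The main obstacle is precisely this piecewise analysis on the shifted line: a single uniform bound would lose a factor of $\log$. Extracting the clean $\log^2$ correction, rather than $\log^3$, requires splitting at the saddle $|t|\approx|y|/a$ and exploiting the full exponential decay $\exp(-\pi|t|/2+t\arg z)$ past it, together with an honest accounting of the logarithmic factor coming from $\zeta'/\zeta$. A secondary technical step is the uniform control of the closing horizontal segments at heights $T_j$, which is standard but depends on quantitative information about the vertical distribution of the non-trivial zeros of $\zeta$.
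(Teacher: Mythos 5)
The paper does not prove this lemma; it simply cites Lemma 1 of Languasco--Zaccagnini \cite{langzac1} (with the $\log^2$ correction recorded in \cite{Lang}). Your reconstruction is the same Mellin--transform argument used there: convert $e^{-mz}$ to a Mellin--Barnes integral, sum against $\Lambda(m)$ to produce $-\zeta'/\zeta$, shift the contour to $\mathrm{Re}(s)=-1/2$ past the poles at $s=1$, $s=\rho$, $s=0$, and bound the left--line integral by splitting at $|t|\asymp|y|/a$, which correctly yields the $\log^2$ (not $\log$) factor. The argument is sound and matches the cited source.
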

(For a proof see Lemma 1 of \cite{langzac1}. The bound for $E\left(a,y\right)$
has been corrected in \cite{Lang}). So in particular, taking $z=\frac{1}{N}+iy$
we have 
\[
\left|\sum_{\rho}z^{-\rho}\Gamma\left(\rho\right)\right|=\left|\frac{1}{z}-\widetilde{S}\left(z\right)+E\left(\frac{1}{N},y\right)\right|\ll N+\frac{1}{\left|z\right|}+\left|E\left(\frac{1}{N},y\right)\right|
\]
\begin{equation}
\ll\begin{cases}
N, & \left|y\right|\leq1/N\\
N+\left|z\right|^{1/2}\log^{2}\left(2N\left|y\right|\right), & \left|y\right|>1/N.
\end{cases}\label{eq:lemmaspec}
\end{equation}

We now introduce the following
\begin{lem}
Let $z=a+iy,\,a>0$, $y\in\mathbb{R}$ and $\ell$ a fixed positive
integer. Then
\begin{equation}
\widetilde{S}_{\ell}\left(z\right)=\frac{\Gamma\left(1/\ell\right)}{\ell z^{1/\ell}}-\frac{1}{\ell}\sum_{\rho}z^{-\rho/\ell}\Gamma\left(\frac{\rho}{\ell}\right)+E_{\ell}\left(a,y\right)\label{eq:stilda-l}
\end{equation}
where $\rho=\beta+i\gamma$ runs over the non-trivial zeros of $\zeta\left(s\right)$
and
\begin{equation}
E_{\ell}\left(a,y\right)\ll_{\ell}E\left(a,y\right).\label{eq:error-1}
\end{equation}
\end{lem}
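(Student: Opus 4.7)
The plan is to mimic the derivation of Lemma 1 for $\widetilde S_1$, taking as starting point the Laplace--Mellin identity
\[
e^{-m^{\ell} z} \;=\; \frac{1}{2\pi i}\int_{(c)} \Gamma(s)\,(m^{\ell} z)^{-s}\, ds, \qquad c>0,
\]
which is just the substitution $v=m^\ell z s$ in \eqref{eq:Lap}. Summing against $\Lambda(m)$ and interchanging sum and integral (legitimate by absolute convergence as soon as $c>1/\ell$) gives
\[
\widetilde S_\ell(z) \;=\; \frac{1}{2\pi i}\int_{(c)} \Gamma(s)\, z^{-s}\,\Bigl(-\frac{\zeta'(\ell s)}{\zeta(\ell s)}\Bigr)\, ds,
\]
and the change of variable $u=\ell s$ rewrites this as
\[
\widetilde S_\ell(z)\;=\;\frac{1}{\ell}\cdot\frac{1}{2\pi i}\int_{(c')}\Gamma(u/\ell)\,z^{-u/\ell}\,\Bigl(-\frac{\zeta'(u)}{\zeta(u)}\Bigr)du,\qquad c'=\ell c>1.
\]
This is the precise analogue of the integral driving the proof of Lemma 1, with $\Gamma(u)$ replaced by $\Gamma(u/\ell)$ and $z^{-u}$ by $z^{-u/\ell}$.

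Next I would shift the contour to a vertical line $\mathrm{Re}(u)=-\eta$ for some small $\eta>0$, exactly as for $\ell=1$. In the strip $-\eta<\mathrm{Re}(u)<c'$ the integrand is meromorphic: the simple pole of $-\zeta'/\zeta$ at $u=1$ contributes the residue $\tfrac{1}{\ell}\Gamma(1/\ell)z^{-1/\ell}$, and each non-trivial zero $u=\rho$ contributes $-\tfrac{1}{\ell}\Gamma(\rho/\ell)z^{-\rho/\ell}$. Summing these residues reproduces exactly the two explicit terms in \eqref{eq:stilda-l}. The pole of $\Gamma(u/\ell)$ at $u=0$ and any trivial zeros of $\zeta$ encountered along the shift are folded into the remaining integral on $\mathrm{Re}(u)=-\eta$, which is what will define $E_\ell(a,y)$.

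It then remains to establish $E_\ell(a,y)\ll_\ell E(a,y)$, and this is where essentially all the real work lies. The argument follows the treatment in \cite{langzac1} with the corrections noted in \cite{Lang}: Stirling yields $|\Gamma(u/\ell)|\ll_\ell|\mathrm{Im}(u)|^{\mathrm{Re}(u)/\ell-1/2}\,e^{-\pi|\mathrm{Im}(u)|/(2\ell)}$, which has the same qualitative behaviour as $|\Gamma(u)|$ up to an $\ell$-dependent constant in the exponential decay rate, while $z^{-u/\ell}$ contributes $|z|^{-\mathrm{Re}(u)/\ell}$ and only improves the decay on the shifted line. Combined with the classical estimates for $\zeta'/\zeta$ on a zero-free vertical strip and on horizontal segments avoiding ordinates of zeros, the whole machinery used for $\ell=1$ goes through verbatim, yielding the $|z|^{1/2}$ bound together with the log-squared factor in \eqref{eq:error}, with an implicit constant that now depends on $\ell$.

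The main (and essentially only) obstacle is this final bookkeeping step: one must check that the bounds on the horizontal connecting segments at large height survive the substitution $u=\ell s$, uniformly in $y$. Because the bounds used in \cite{langzac1, Lang} are phrased in terms of $|u|$ and of distance to the critical strip rather than in terms of the particular linear change of variable, this verification is mechanical and delivers \eqref{eq:error-1} with constants depending only on $\ell$.
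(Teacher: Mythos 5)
Your plan has the right skeleton and it does match the paper's method in spirit: start from the Mellin/Laplace representation of $\widetilde{S}_\ell$, shift the contour, collect the residue at $s=1/\ell$ (the pole of $-\zeta'/\zeta(\ell s)$) and at $s=\rho/\ell$ (the non-trivial zeros), and estimate the remaining vertical integral. The paper simply cites this explicit formula (formula 5 of \cite{langzac3}) rather than re-deriving it, which saves the residue computation, so there is no real disagreement up to that point.

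The gap is in your treatment of the remaining integral. After the substitution $u=\ell s$ you propose to integrate over $\mathrm{Re}(u)=-\eta$ for ``small $\eta>0$'', and you assert that $z^{-u/\ell}$ ``only improves the decay on the shifted line.'' That is false in the regime that matters: on $\mathrm{Re}(u)=-\eta$ one has $\left|z^{-u/\ell}\right|=\left|z\right|^{\eta/\ell}\exp\left(\mathrm{Im}(u)\arctan(y/a)/\ell\right)$, and for $\left|z\right|<1$ the factor $\left|z\right|^{\eta/\ell}$ is \emph{larger} than $\left|z\right|^{1/2}$ when $\eta/\ell<1/2$. Since the application takes $z=1/N+iy$ with $N$ large, the region $\left|y\right|\leq a=1/N$ has $\left|z\right|\ll 1/N\ll 1$, and there the crucial bound $E_\ell(a,y)\ll_\ell\left|z\right|^{1/2}$ in \eqref{eq:error} would simply not be obtained from a contour at small $-\eta$ (nor, incidentally, at $\eta=1/2$ as suggested by ``exactly as for $\ell=1$'', which would give $\left|z\right|^{1/(2\ell)}$). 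To recover $\left|z\right|^{1/2}$ one must take $\mathrm{Re}(u)=-\ell/2$, which is precisely $\mathrm{Re}(w)=-1/2$ in the unsubstituted variable. This is why the paper does \emph{not} change variables: it keeps the integral as $\int_{(-1/2)}\frac{\zeta'}{\zeta}(\ell w)\Gamma(w)z^{-w}\,dw$, so that the factors $\Gamma(w)z^{-w}$ are literally the same as in Lemma 1 of \cite{langzac1}, and the only thing that changes is $\zeta'/\zeta(\ell w)$ in place of $\zeta'/\zeta(w)$, which on $\mathrm{Re}(w)=-1/2$ still satisfies $\ll_\ell\log(\left|t\right|+2)$. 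That observation reduces the estimate to a one-line citation of \cite{langzac1}, rather than the ``mechanical bookkeeping'' you describe (which, as written, would actually deliver a weaker bound).

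In short: correct outline, wrong contour. Replace ``small $\eta$'' by $\eta=\ell/2$ (equivalently, don't substitute at all and shift $w$ to $-1/2$), and the rest goes through.
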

\begin{proof}
It is well know (see for example formula 5 of \cite{langzac3}) that,
for $\ell\in\mathbb{N}_{0}$,
\begin{align}
\widetilde{S}_{\ell}\left(z\right)= & \sum_{m\geq1}\Lambda\left(m\right)e^{-m^{\ell}z}\nonumber \\
= & \frac{\Gamma\left(1/\ell\right)}{\ell z^{1/\ell}}-\frac{1}{\ell}\sum_{\rho}z^{-\rho/\ell}\Gamma\left(\frac{\rho}{\ell}\right)-\frac{\zeta'}{\zeta}\left(0\right)-\frac{1}{2\pi i}\int_{\left(-1/2\right)}\frac{\zeta'}{\zeta}\left(\ell w\right)\Gamma\left(w\right)z^{-w}dw\label{eq:lem3int}
\end{align}
so, taking $w=-\frac{1}{2}+it$, following the proof of the Lemma
1 in \cite{langzac1} and observing that 
\[
\left|\frac{\zeta'}{\zeta}\left(\ell w\right)\right|\ll_{\ell}\log\left(\left|t\right|+2\right)
\]
we can conclude that we may estimate the integral in (\ref{eq:lem3int})
exactly as in \cite{langzac1}, so the claim follows. 
\end{proof}
Now we have to recall that the Prime Number Theorem (PNT) is equivalent,
via Lemma 2, to the statement
\begin{equation}
\widetilde{S}_{1}\left(a\right)\sim a^{-1},\textrm{ when }a\rightarrow0^{+}\label{eq:asims1}
\end{equation}
(see Lemma 9 of \cite{HarLit}) and from Lemma 3 we have 
\begin{equation}
\widetilde{S}_{2}\left(a\right)\sim\frac{\sqrt{\pi}}{2a^{1/2}},\textrm{ when }a\rightarrow0^{+}.\label{eq:asims2}
\end{equation}
 For our purposes it is important to introduce the Stirling approximation
(see for example \S 4.42 of \cite{titfun})
\begin{equation}
\left|\Gamma\left(x+iy\right)\right|\sim\sqrt{2\pi}e^{-\pi\left|y\right|/2}\left|y\right|^{x-1/2}\label{eq:1}
\end{equation}
 uniformly for $x\in\left[x_{1},x_{2}\right]$, $x_{1}$ and $x_{2}$
fixed, as well as the identity 
\begin{equation}
\left|z^{-w}\right|=\left|z\right|^{-\textrm{Re}\left(w\right)}\exp\left(\textrm{Im}\left(w\right)\arctan\left(y/a\right)\right).\label{eq:modzcomplgen}
\end{equation}

We now quote Lemmas 2 and 3 from \cite{langzac1}:
\begin{lem}
Let $\beta+i\gamma$ run over the non-trivial zeros of the Riemann
zeta function and let $\alpha>1$ be a parameter. The series
\[
\sum_{\rho,\,\gamma>0}\gamma^{\beta-1/2}\int_{1}^{\infty}\exp\left(-\gamma\arctan\left(1/u\right)\right)\frac{du}{u^{\alpha+\beta}}
\]
converges provided that $\alpha>3/2$. For $\alpha\leq3/2$ the series
does not converge. The result remains true if we insert in the integral
a factor $\log^{c}\left(u\right)$, for any fixed $c\geq0$.
\end{lem}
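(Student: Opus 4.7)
The plan is to obtain a sharp two-sided estimate for the inner integral
\[
I(\rho,\alpha):=\int_{1}^{\infty}\exp\!\bigl(-\gamma\arctan(1/u)\bigr)u^{-\alpha-\beta}\,du
\]
at a single zero $\rho=\beta+i\gamma$ with $\gamma>0$, and then to sum over zeros by means of the classical counting estimate $N(T)\asymp T\log T$. The decisive feature is that on $[1,\infty)$ one has $\arctan(1/u)\asymp 1/u$, so the exponential is comparable to $e^{-\gamma/u}$ and the natural break point of the integral is $u\asymp\gamma$.

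I would therefore split $I(\rho,\alpha)=\int_{1}^{\gamma}+\int_{\gamma}^{\infty}$, treating the finitely many zeros with $\gamma\leq 1$ trivially. On the tail the exponential is $O(1)$ and direct integration gives $\int_{\gamma}^{\infty}u^{-\alpha-\beta}\,du\ll\gamma^{1-\alpha-\beta}$, using $\alpha+\beta>1$. On $[1,\gamma]$, inserting the elementary bound $\arctan(1/u)\geq 1/(2u)$ and substituting $v=\gamma/(2u)$ converts the integral into
\[
2^{\alpha+\beta-1}\gamma^{1-\alpha-\beta}\int_{1/2}^{\gamma/2}e^{-v}v^{\alpha+\beta-2}\,dv\ll\gamma^{1-\alpha-\beta}\Gamma(\alpha+\beta-1),
\]
and the Gamma value is uniformly bounded for $\beta\in[0,1]$ since $\alpha>1$. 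Multiplying by $\gamma^{\beta-1/2}$ the powers of $\beta$ cancel precisely, yielding $\gamma^{\beta-1/2}I(\rho,\alpha)\ll\gamma^{1/2-\alpha}$ uniformly in $\beta$. This uniform cancellation is the key point I expect to be the subtlest: without it the convergence of the outer sum would depend on the (unknown) horizontal distribution of zeros rather than purely on the vertical density.

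It then remains to sum $\gamma^{1/2-\alpha}$ over the zeros, which by Riemann--Stieltjes and $N(T)\asymp T\log T$ behaves like
\[
\int_{1}^{\infty}t^{1/2-\alpha}\log t\,dt,
\]
finite if and only if $\alpha>3/2$. The divergence half for $\alpha\leq 3/2$ follows from the matching lower bound $I(\rho,\alpha)\gg\gamma^{1-\alpha-\beta}$ (restrict to $u\in[\gamma,2\gamma]$ and use $\arctan(1/u)\leq 1/u$) together with the lower estimate $N(T)\gg T\log T$; the same cancellation $\gamma^{\beta-1/2}\cdot\gamma^{1-\alpha-\beta}=\gamma^{1/2-\alpha}$ makes the lower bound $\beta$-free as well. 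Finally, the factor $\log^{c}u$ is absorbed effortlessly: the crude bound $\log^{c}u\ll_{\varepsilon}u^{\varepsilon}$ only shifts the exponent from $1/2-\alpha$ to $1/2-\alpha+\varepsilon$, and choosing $\varepsilon$ small enough preserves convergence for every $\alpha>3/2$, while for divergence the logarithm is nonnegative on $[1,\infty)$ so the lower-bound argument is unaffected.
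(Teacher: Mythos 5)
Your proof is correct. Note that the paper does not actually prove this lemma: it is quoted verbatim from Lemma~2 of Languasco--Zaccagnini \cite{langzac1}, so there is no in-paper argument to compare against. Your approach is the natural one and is, as far as I can tell, essentially the argument one would expect there as well. You split the inner integral at $u\asymp\gamma$, bound the tail trivially by $\gamma^{1-\alpha-\beta}/(\alpha+\beta-1)$ (uniform in $\beta$ since $\alpha>1$), and on $[1,\gamma]$ use $\arctan(1/u)\geq \tfrac{\pi}{4u}>\tfrac{1}{2u}$ to reduce to a truncated Gamma integral $\Gamma(\alpha+\beta-1)$, which stays bounded for $\beta\in[0,1]$ precisely because $\alpha>1$ keeps the argument away from the pole at $0$. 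The decisive observation --- that multiplying the resulting $\gamma^{1-\alpha-\beta}$ by the outer $\gamma^{\beta-1/2}$ cancels all dependence on $\beta$ and leaves $\gamma^{1/2-\alpha}$ --- is exactly what makes the lemma unconditional on the horizontal distribution of zeros, and the Riemann--von Mangoldt count $N(T)\asymp T\log T$ then yields convergence exactly for $\alpha>3/2$. The matching lower bound obtained by restricting to $u\in[\gamma,2\gamma]$ with $\arctan(1/u)\leq 1/u$, and the absorption $\log^{c}u\ll_{\varepsilon}u^{\varepsilon}$ with $\varepsilon<\alpha-3/2$ (while $\log^{c}u\geq 1$ preserves the lower bound), are both complete and correct, so all three assertions of the lemma are established.
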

$\ $
\begin{lem}
Let $\beta+i\gamma$ run over the non-trivial zeros of the Riemann
zeta function, let $z=a+iy,$ $a\in\left(0,1\right)$, $y\in\mathbb{R}$
and $\alpha>1$. We have
\[
\sum_{\rho}\left|\gamma\right|^{\beta-1/2}\int_{\mathbb{Y}_{1}\cup\mathbb{Y}_{2}}\exp\left(\gamma\arctan\left(\frac{y}{a}\right)-\frac{\pi}{2}\left|\gamma\right|\right)\frac{dy}{\left|z\right|^{\alpha+\beta}}\ll_{\alpha}a^{-\alpha}
\]
where $\mathbb{Y}_{1}=\left\{ y\in\mathbb{R}:\,\gamma y\leq0\right\} $
and $\mathbb{Y}_{2}=\left\{ y\in\left[-a,a\right]:\,y\gamma>0\right\} $.
The result remains true if we insert in the integral a factor $\log^{c}\left(\left|y\right|/a\right)$,
for any fixed $c\geq0.$
\end{lem}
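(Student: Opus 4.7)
The plan is to bound the integral associated to each zero $\rho$ and then sum, exploiting the strong exponential decay in $|\gamma|$ already built into the integrand. Since the non-trivial zeros come in conjugate pairs and the integrand and the sets $\mathbb{Y}_{1}, \mathbb{Y}_{2}$ are invariant under the simultaneous substitution $(y,\gamma)\mapsto(-y,-\gamma)$, it suffices to consider $\gamma > 0$. Under this reduction $\mathbb{Y}_{1} = (-\infty, 0]$ and $\mathbb{Y}_{2} = (0, a]$, so $\mathbb{Y}_{1} \cup \mathbb{Y}_{2} = (-\infty, a]$.

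The decisive observation is the pointwise bound
\[
\gamma\arctan(y/a) - \tfrac{\pi}{2}\gamma \;\leq\; -\tfrac{\pi}{4}\gamma \qquad \text{for all } y \in (-\infty, a],
\]
which follows because $\arctan(y/a) \leq 0$ on $\mathbb{Y}_{1}$ (yielding exponent $\leq -\pi\gamma/2$) and $\arctan(y/a) \leq \pi/4$ on $\mathbb{Y}_{2}$ (yielding $\leq -\pi\gamma/4$). So the integrand is dominated by $e^{-\pi\gamma/4}(a^{2}+y^{2})^{-(\alpha+\beta)/2}$. I would then split the $y$-integral at $y = \pm a$: on $[-a, a]$ use $(a^{2}+y^{2})^{-(\alpha+\beta)/2} \leq a^{-\alpha-\beta}$ to get $\leq 2a^{1-\alpha-\beta}$; on $(-\infty, -a]$ substitute $y = -at$ with $t \geq 1$ to reduce to $a^{1-\alpha-\beta}\int_{1}^{\infty}(1+t^{2})^{-(\alpha+\beta)/2}\,dt$, which converges because $\alpha+\beta > 1$. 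Thus the contribution of each zero is $\ll_{\alpha} |\gamma|^{\beta-1/2} e^{-\pi|\gamma|/4} a^{1-\alpha-\beta}$.

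Since $a \in (0,1)$ and $\beta \in (0,1)$ give $a^{1-\alpha-\beta} = a^{-\alpha}\cdot a^{1-\beta} \leq a^{-\alpha}$, factoring out $a^{-\alpha}$ reduces the task to showing that
\[
\sum_{\rho} |\gamma|^{\beta-1/2} e^{-\pi|\gamma|/4} \;\ll\; \sum_{\rho} |\gamma|^{1/2} e^{-\pi|\gamma|/4}
\]
converges absolutely, which is immediate from the Riemann--von Mangoldt density $N(T) \ll T\log T$ swamped by the exponential. The insertion of $\log^{c}(|y|/a)$ changes nothing of substance: after the same scaling $y = at$ it becomes $|\log|t||^{c}$, and both $\int_{0}^{1}|\log t|^{c}\,dt$ and $\int_{1}^{\infty}|\log t|^{c}(1+t^{2})^{-(\alpha+\beta)/2}\,dt$ are finite, so the implicit constant simply acquires a $c$-dependence.

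The only real obstacle is avoiding overcomplication: one might be tempted to perform the $\arctan$-reflection manipulations (via $\arctan(u) + \arctan(1/u) = \pi/2$) genuinely needed in Lemma~4, where no bare $e^{-\pi|\gamma|/2}$ factor is available. Here, by contrast, the unconditional exponential decay in $|\gamma|$ is already abundant, and the crude bounds on $\gamma\arctan(y/a)$ suffice; the hypothesis $\alpha > 1$ enters only to make $\int_{0}^{\infty}(1+t^{2})^{-(\alpha+\beta)/2}\,dt$ finite, in contrast with the sharper condition $\alpha > 3/2$ needed in Lemma~4.
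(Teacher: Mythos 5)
Your proof is correct. The paper states this lemma without giving a proof (it is quoted as Lemma 3 of \cite{langzac1}), and your argument --- reducing to $\gamma>0$ by the conjugate symmetry of the zeros, observing that the exponent is at most $-\pi\gamma/4$ on all of $\mathbb{Y}_{1}\cup\mathbb{Y}_{2}$, scaling $y=at$ to extract $a^{1-\alpha-\beta}\leq a^{-\alpha}$ (uniformly in $\beta$ since $\alpha>1$ and $0<\beta<1$), and summing via the Riemann--von Mangoldt estimate --- is essentially the standard argument used there, including the correct treatment of the $\log^{c}$ factor after rescaling.
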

Let us introduce another lemma
\begin{lem}
Let $\rho=\beta+i\gamma$ run over the non-trivial zeros of the Riemann
zeta function, let $z=\frac{1}{N}+iy,$ where $N>1$ is a natural
number, $y\in\mathbb{R}$, $\ell\geq1$ an integer and $\alpha>3/2$.
We have
\[
\sum_{\rho}\left|\Gamma\left(\frac{\rho}{\ell}\right)\right|\int_{\left(1/N\right)}\left|e^{Nz}\right|\left|z^{-\rho/\ell}\right|\left|z\right|^{-\alpha}\left|dz\right|\ll_{\alpha}N^{\alpha}.
\]
\end{lem}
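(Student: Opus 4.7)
The plan is to combine Stirling's approximation with the identity \eqref{eq:modzcomplgen} in order to reduce the claim to estimates of the same shape as Lemmas 4 and 5. Since $z=1/N+iy$, we have $|e^{Nz}|=e$ and $|dz|=dy$, so these factors are harmless. By \eqref{eq:1}, for $|\gamma|\geq 1$,
\[
|\Gamma(\rho/\ell)|\ll_{\ell}|\gamma|^{\beta/\ell-1/2}e^{-\pi|\gamma|/(2\ell)},
\]
while the contribution of the finitely many zeros with $|\gamma|<1$ is trivially $O_{\ell}(N^{\alpha})$. By \eqref{eq:modzcomplgen}, $|z^{-\rho/\ell}|=|z|^{-\beta/\ell}\exp((\gamma/\ell)\arctan(Ny))$. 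So, up to a multiplicative $O_{\ell}(1)$, the quantity to bound is
\[
\sum_{\rho}|\gamma|^{\beta/\ell-1/2}\int_{\mathbb{R}}|z|^{-\alpha-\beta/\ell}\exp\!\left(\tfrac{\gamma}{\ell}\arctan(Ny)-\tfrac{\pi|\gamma|}{2\ell}\right)dy.
\]

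Following the architecture of Lemma 5, I would split $\mathbb{R}=\mathbb{Y}_{1}\cup\mathbb{Y}_{2}\cup\mathbb{Y}_{3}$, where $\mathbb{Y}_{1},\mathbb{Y}_{2}$ are as in Lemma 5 with $a=1/N$ and $\mathbb{Y}_{3}=\{y:|y|>1/N,\,\gamma y>0\}$. On $\mathbb{Y}_{1}$ the bracket is $\leq-\pi|\gamma|/(2\ell)$, and on $\mathbb{Y}_{2}$ one has $|\arctan(Ny)|\leq\pi/4$, so the bracket is $\leq-\pi|\gamma|/(4\ell)$. In both cases the exponential decay in $|\gamma|$ makes the sum over $\rho$ converge trivially, while the remaining $y$-integral of $|z|^{-\alpha-\beta/\ell}$ is elementary and bounded by $O(N^{\alpha+\beta/\ell-1})\leq O(N^{\alpha})$ (using $\beta<1\leq\ell$). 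This mirrors the proof of Lemma 5 verbatim with $\rho$ replaced by $\rho/\ell$; only $\ell$-dependent constants change.

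The crux is $\mathbb{Y}_{3}$. Using $\arctan(Ny)=(\pi/2)\operatorname{sgn}(\gamma)-\arctan(1/(N|y|))$ whenever $\gamma y>0$, the bracket collapses to $-(|\gamma|/\ell)\arctan(1/(N|y|))$. After the substitution $u=N|y|$, and noting $|z|^{2}=(1+u^{2})/N^{2}\asymp u^{2}/N^{2}$ for $u\geq 1$,
\[
\int_{\mathbb{Y}_{3}}|z|^{-\alpha-\beta/\ell}\exp\!\left(\tfrac{\gamma}{\ell}\arctan(Ny)-\tfrac{\pi|\gamma|}{2\ell}\right)dy\ll N^{\alpha+\beta/\ell-1}\int_{1}^{\infty}u^{-\alpha-\beta/\ell}\exp\!\left(-\tfrac{|\gamma|}{\ell}\arctan(1/u)\right)du.
\]
The resulting series (with the Stirling factor $|\gamma|^{\beta/\ell-1/2}$ in front) converges exactly as in Lemma 4, with $\rho/\ell$ replacing $\rho$, provided $\alpha>3/2$.

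The main obstacle, and the origin of the hypothesis $\alpha>3/2$, is the tail estimate $\int_{1}^{\infty}u^{-\alpha-\beta/\ell}\exp(-(|\gamma|/\ell)\arctan(1/u))\,du\ll_{\ell}|\gamma|^{1-\alpha-\beta/\ell}$ for large $|\gamma|$ (obtained, e.g., by the standard change of variable $w=|\gamma|/(\ell u)$). Combined with $|\gamma|^{\beta/\ell-1/2}$, this yields a per-zero bound $|\gamma|^{1/2-\alpha}$, which is summable against the zero-counting density $N(T)\sim T\log T$ precisely when $\alpha>3/2$. One then gets the desired $\ll_{\alpha}N^{\alpha+\beta/\ell-1}\leq N^{\alpha}$, completing the argument. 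The $\ell$-adaptation is smooth because neither the Stirling exponent nor the zero density depends on $\ell$; the only effect of $\ell>1$ is to shrink $\beta$ and $\gamma$ by a factor of $\ell$, which has no qualitative impact on the convergence threshold.
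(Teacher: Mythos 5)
Your argument is correct and follows essentially the same route as the paper: the paper reduces the left-hand side to the expression \eqref{eq:lemma6} via Stirling and \eqref{eq:modzcomplgen}, and then delegates the convergence to Lemma 6 of \cite{Canta} (which in turn rests on Lemmas 4 and 5 here), which is exactly the $\mathbb{Y}_{1}\cup\mathbb{Y}_{2}\cup\mathbb{Y}_{3}$ split you reproduce, including the per-zero bound $|\gamma|^{1/2-\alpha}$ that forces $\alpha>3/2$. One small slip worth fixing: the identity you state should read $\arctan(Ny)=\operatorname{sgn}(y)\bigl(\pi/2-\arctan(1/(N|y|))\bigr)$ (the sign must multiply both terms); since $\operatorname{sgn}(y)=\operatorname{sgn}(\gamma)$ on $\mathbb{Y}_{3}$, your stated collapse of the bracket to $-(|\gamma|/\ell)\arctan(1/(N|y|))$ is nevertheless correct, so this is a typographical rather than a logical error.
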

\begin{proof}
Put $a=\frac{1}{N}.$ Using the identity (\ref{eq:modzcomplgen}),
(\ref{eq:1}) and 
\begin{equation}
\left|z\right|^{-1}\asymp\begin{cases}
a^{-1} & \left|y\right|\leq a,\\
\left|y\right|^{-1} & \left|y\right|\geq a,
\end{cases}\label{eq:modz-1}
\end{equation}
we get that the left hand side in the statement above is

\begin{equation}
\sum_{\rho}\left|\gamma\right|^{\beta/\ell-1/2}\int_{\mathbb{R}}\exp\left(\frac{\gamma}{\ell}\arctan\left(\frac{y}{a}\right)-\frac{\pi}{2}\frac{\left|\gamma\right|}{\ell}\right)\frac{dy}{\left|z\right|^{\alpha+\beta/\ell}}.\label{eq:lemma6}
\end{equation}
The case $\ell=1$ has already been discussed in Lemma 6 of \cite{Canta}.
For $\ell>1$, observing Lemmas 2 and 3 of \cite{langzac1} and Lemma
6 \cite{Canta}, we can conclude that the presence of $\ell$ does
not alter the proofs. Hence using the same argument of Lemma 6 of
\cite{Canta} we have the convergence for $\alpha>3/2$. 
\end{proof}

\section{Setting}

From (\ref{eq:stild}) and (\ref{eq:stild2}) it is not hard to see
that 
\[
\widetilde{S}_{1}\left(z\right)\widetilde{S}_{2}^{2}\left(z\right)=\sum_{m_{1}\geq1}\sum_{m_{2}\geq1}\sum_{m_{3}\geq1}\Lambda\left(m_{1}\right)\Lambda\left(m_{2}\right)\Lambda\left(m_{3}\right)e^{-\left(m_{1}+m_{2}^{2}+m_{3}^{2}\right)z}=\sum_{n\geq1}r_{SP}\left(n\right)e^{-nz}
\]
so let $z=a+iy$ and $a>0$ and let us consider
\[
\frac{1}{2\pi i}\int_{\left(a\right)}e^{Nz}z^{-k-1}\widetilde{S}_{1}\left(z\right)\widetilde{S}_{2}^{2}\left(z\right)dz=\frac{1}{2\pi i}\int_{\left(a\right)}e^{Nz}z^{-k-1}\sum_{n\geq1}r_{SP}\left(n\right)e^{-nz}dz.
\]
Now we prove that we can exchange the integral with the series. From
(\ref{eq:asims1}) and (\ref{eq:asims2}) we have 
\[
\sum_{n\geq1}\left|r_{SP}\left(n\right)e^{-nz}\right|=\widetilde{S}_{1}\left(a\right)\widetilde{S}_{2}^{2}\left(a\right)\ll a^{-2}
\]
hence
\begin{align*}
\int_{\left(a\right)}\left|e^{Nz}z^{-k-1}\right|\left|\widetilde{S}_{1}\left(z\right)\widetilde{S}_{2}^{2}\left(z\right)\right|\left|dz\right|\ll & a^{-2}e^{Na}\left(\int_{-a}^{a}a^{-k-1}dy+2\int_{a}^{\infty}y^{-k-1}dy\right)\\
\ll_{k} & a^{-2-k}e^{Na}
\end{align*}
assuming $k>0$, so we have that 
\begin{equation}
\sum_{n\leq N}r_{SP}\left(n\right)\frac{\left(N-n\right)^{k}}{\Gamma\left(k+1\right)}=\frac{1}{2\pi i}\int_{\left(a\right)}e^{Nz}z^{-k-1}\widetilde{S}_{1}\left(z\right)\widetilde{S}_{2}^{2}\left(z\right)dz.\label{eq:main}
\end{equation}
Now from (\ref{eq:stilda}), (\ref{eq:stilda-l}), (\ref{eq:asims1})
and (\ref{eq:asims2}) and observing that, for $\ell\geq1,$ 
\[
\frac{\Gamma\left(1/\ell\right)}{\ell z^{1/\ell}}-\frac{1}{\ell}\sum_{\rho}z^{-\rho/\ell}\Gamma\left(\frac{\rho}{\ell}\right)=\widetilde{S}_{\ell}\left(z\right)-E_{\ell}\left(a,y\right)\ll a^{-1/\ell}+\left|E_{\ell}\left(a,y\right)\right|
\]
we have
\begin{align}
\widetilde{S}_{1}\left(z\right)\widetilde{S}_{2}^{2}\left(z\right)= & \left(\frac{1}{z}-\sum_{\rho}z^{-\rho}\Gamma\left(\rho\right)+E_{1}\left(a,y\right)\right)\left(\frac{\sqrt{\pi}}{2z^{1/2}}-\frac{1}{2}\sum_{\rho}z^{-\rho/2}\Gamma\left(\frac{\rho}{2}\right)+E_{2}\left(a,y\right)\right)^{2}\nonumber \\
= & \left(\frac{1}{z}-\sum_{\rho}z^{-\rho}\Gamma\left(\rho\right)\right)\left(\frac{\sqrt{\pi}}{2z^{1/2}}-\frac{1}{2}\sum_{\rho}z^{-\rho/2}\Gamma\left(\frac{\rho}{2}\right)\right)^{2}\nonumber \\
+ & O\left(\left|E_{1}\left(a,y\right)\right|a^{-1}+\left|E_{1}\left(a,y\right)\right|\left|E_{2}\left(a,y\right)\right|^{2}+\left|E_{1}\left(a,y\right)\right|\left|E_{2}\left(a,y\right)\right|a^{-1/2}\right.\label{eq:comberrors0}\\
+ & \left.\left|E_{2}\left(a,y\right)\right|^{2}a^{-1}+a^{-3/2}\left|E_{2}\left(a,y\right)\right|\right).\label{eq:comberrors}
\end{align}
Now let us consider $l,m,r,s\geq1$ integers. From (\ref{eq:error})
and (\ref{eq:error-1}) we have that 
\[
\int_{\left(a\right)}\left|e^{Nz}z^{-k-1}\right|\left|E_{l}\left(a,y\right)\right|^{r}\left|E_{m}\left(a,y\right)\right|^{s}\left|dz\right|
\]
\[
\ll_{l,m,r,s}e^{Na}\left(a^{-k-1+\frac{r+s}{2}}\int_{0}^{a}dy+\int_{a}^{\infty}y^{-k-1+\frac{r+s}{2}}\log^{2r+2s}\left(\frac{y}{a}\right)dy\right)
\]
\[
\ll_{l,m,r,s}e^{Na}a^{-k+\frac{r+s}{2}}
\]
assuming $k>\frac{r+s}{2}$. We now have to deal with the terms in
(\ref{eq:comberrors0}) and (\ref{eq:comberrors}): taking $a=1/N$
we can observe that 
\[
\int_{\left(1/N\right)}\left|e^{Nz}z^{-k-1}\right|\left|E_{2}\left(1/N,y\right)\right|^{2}\left|E_{1}\left(1/N,y\right)\right|\left|dz\right|\ll_{k}N^{k-3/2},
\]
\[
N^{1/2}\int_{\left(1/N\right)}\left|e^{Nz}z^{-k-1}\right|\left|E_{2}\left(1/N,y\right)\right|\left|E_{1}\left(1/N,y\right)\right|\left|dz\right|\ll N^{k-1/2},
\]
\[
N\int_{\left(1/N\right)}\left|e^{Nz}z^{-k-1}\right|\left|E_{2}\left(1/N,y\right)\right|^{2}\left|dz\right|\ll N^{k}
\]
\[
N^{3/2}\int_{\left(1/N\right)}\left|e^{Nz}z^{-k-1}\right|\left|E_{2}\left(1/N,y\right)\right|\left|dz\right|\ll N^{k+1}
\]
and
\[
N\int_{\left(1/N\right)}\left|e^{Nz}z^{-k-1}\right|\left|E_{1}\left(1/N,y\right)\right|\left|dz\right|\ll N^{k+1/2};
\]
hence the Ces\`aro average of $r_{SP}\left(n\right)$ can be broken
down as
\begin{align*}
\sum_{n\leq N}r_{SP}\left(n\right)\frac{\left(N-n\right)^{k}}{\Gamma\left(k+1\right)}= & \frac{1}{2\pi i}\int_{\left(1/N\right)}e^{Nz}z^{-k-1}\left(\frac{1}{z}-\sum_{\rho}z^{-\rho}\Gamma\left(\rho\right)\right)\left(\frac{\sqrt{\pi}}{2z^{1/2}}-\frac{1}{2}\sum_{\rho}z^{-\rho/2}\Gamma\left(\frac{\rho}{2}\right)\right)^{2}dz\\
+ & O_{k}\left(N^{k+1}\right)\\
= & \frac{1}{8i}\int_{\left(1/N\right)}e^{Nz}z^{-k-3}dz\\
+ & \frac{1}{8i}\int_{\left(1/N\right)}e^{Nz}z^{-k-2}\sum_{\rho}z^{-\rho}\Gamma\left(\rho\right)dz\\
- & \frac{1}{4\sqrt{\pi}i}\int_{\left(1/N\right)}e^{Nz}z^{-k-5/2}\sum_{\rho}z^{-\rho/2}\Gamma\left(\frac{\rho}{2}\right)dz\\
+ & \frac{1}{4\sqrt{\pi}i}\int_{\left(1/N\right)}e^{Nz}z^{-k-3/2}\sum_{\rho_{1}}z^{-\rho_{1}}\Gamma\left(\rho_{1}\right)\sum_{\rho_{2}}z^{-\rho_{2}/2}\Gamma\left(\frac{\rho_{2}}{2}\right)dz\\
+ & \frac{1}{8\pi i}\int_{\left(1/N\right)}e^{Nz}z^{-k-2}\sum_{\rho_{1}}z^{-\rho_{1}/2}\Gamma\left(\frac{\rho_{1}}{2}\right)\sum_{\rho_{2}}z^{-\rho_{2}/2}\Gamma\left(\frac{\rho_{2}}{2}\right)dz\\
- & \frac{1}{8\pi i}\int_{\left(1/N\right)}e^{Nz}z^{-k-1}\sum_{\rho_{1}}z^{-\rho_{1}}\Gamma\left(\rho_{1}\right)\sum_{\rho_{2}}z^{-\rho_{2}/2}\Gamma\left(\frac{\rho_{2}}{2}\right)\sum_{\rho_{3}}z^{-\rho_{3}/2}\Gamma\left(\frac{\rho_{3}}{2}\right)dz\\
+ & O_{k}\left(N^{k+1}\right)\\
=: & I_{1}+I_{2}+I_{3}+I_{4}+I_{5}+I_{6}+O_{k}\left(N^{k+1}\right),
\end{align*}
say. In the next sections we will prove that $I_{1}=M_{1}\left(N,k\right)$,
$I_{2}+I_{3}=M_{2}\left(N,k\right)$, $I_{4}+I_{5}=M_{3}\left(N,k\right)$
and $I_{6}=M_{4}\left(N,k\right).$

\section{Evaluation of $I_{1}$}

From $I_{1}$ we will find the main term. If we put $Nz=s$ we get
\begin{equation}
I_{1}=\frac{1}{8i}\int_{\left(1/N\right)}e^{Nz}z^{-k-3}dz=\frac{N^{k+2}}{8i}\int_{\left(1\right)}e^{s}s^{-k-3}ds=\frac{N^{k+2}\pi}{4\Gamma\left(k+3\right)}
\end{equation}
using (\ref{eq:Lap}). Then $I_{1}=M_{1}\left(N,k\right)$.

\section{Evaluation of $I_{2}$ and $I_{3}$}

We have
\[
I_{2}=\frac{1}{8i}\int_{\left(1/N\right)}e^{Nz}z^{-k-2}\sum_{\rho}z^{-\rho}\Gamma\left(\rho\right)dz
\]
 and 
\[
I_{3}=-\frac{1}{4\sqrt{\pi}i}\int_{\left(1/N\right)}e^{Nz}z^{-k-5/2}\sum_{\rho}z^{-\rho/2}\Gamma\left(\frac{\rho}{2}\right)dz.
\]
We want to exchange the integral with the series, then we will prove
the absolute convergence for a suitable choice of $k.$ Hence we have
to study
\[
A_{2}:=\left|\sum_{\rho}\Gamma\left(\rho\right)\right|\int_{\left(1/N\right)}\left|e^{Nz}\right|\left|z^{-k-2}\right|\left|z^{-\rho}\right|\left|dz\right|
\]
 and
\[
A_{3}:=\left|\sum_{\rho}\Gamma\left(\frac{\rho}{2}\right)\right|\int_{\left(1/N\right)}\left|e^{Nz}\right|\left|z^{-k-5/2}\right|\left|z^{-\rho/2}\right|\left|dz\right|
\]
 and from Lemma 6 we have the convergence for $k>-1/2$ and $k>-1$
respectively. So we can switch the integral and the series and get
\[
I_{2}=\frac{1}{8i}\sum_{\rho}\Gamma\left(\rho\right)\int_{\left(1/N\right)}e^{Nz}z^{-k-2-\rho}dz=\frac{N^{k+1}\pi}{4}\sum_{\rho}N^{\rho}\frac{\Gamma\left(\rho\right)}{\Gamma\left(k+2+\rho\right)}
\]
and 
\[
I_{3}=-\frac{1}{4\sqrt{\pi}i}\sum_{\rho}\Gamma\left(\frac{\rho}{2}\right)\int_{\left(1/N\right)}e^{Nz}z^{-k-5/2-\rho/2}dz=-\frac{N^{k+3/2}\sqrt{\pi}}{2}\sum_{\rho}N^{\rho/2}\frac{\Gamma\left(\rho/2\right)}{\Gamma\left(k+5/2+\rho/2\right)}
\]
then $I_{2}+I_{3}=M_{2}\left(N,k\right)$.

\section{Evaluation of $I_{4}$}

We have to evaluate 
\[
I_{4}=\frac{1}{4\sqrt{\pi}i}\int_{\left(1/N\right)}e^{Nz}z^{-k-3/2}\sum_{\rho_{1}}z^{-\rho_{1}}\Gamma\left(\rho_{1}\right)\sum_{\rho_{2}}z^{-\rho_{2}/2}\Gamma\left(\frac{\rho_{2}}{2}\right)dz.
\]
We want to switch the integral with two series so we will prove the
absolute convergence of 
\[
A_{4,1}:=\sum_{\rho_{1}}\left|\Gamma\left(\rho_{1}\right)\right|\int_{\left(1/N\right)}\left|e^{Nz}\right|\left|z^{-k-3/2}\right|\left|z^{-\rho_{1}}\right|\left|\sum_{\rho_{2}}z^{-\rho_{2}/2}\Gamma\left(\frac{\rho_{2}}{2}\right)\right|\left|dz\right|
\]
and
\[
A_{4,2}:=\sum_{\rho_{1}}\left|\Gamma\left(\rho_{1}\right)\right|\sum_{\rho_{2}}\left|\Gamma\left(\frac{\rho_{2}}{2}\right)\right|\int_{\left(1/N\right)}\left|e^{Nz}\right|\left|z^{-k-3/2}\right|\left|z^{-\rho_{1}}\right|\left|z^{-\rho_{2}/2}\right|\left|dz\right|.
\]
Now we have to introduce some notations, which is necessary since
the evaluation of the integrals depends strictly on the sign of $y$
and the sign of the imaginary part of $\rho.$ Assume that $A_{m,n}:=\int_{\left(1/N\right)}\dots\left|dz\right|.$
Hereafter we will use the symbol 
\begin{equation}
A_{m,n}^{+}:=\int_{0}^{1/N+i\infty}\dots\left|dz\right|\label{eq:simbol+}
\end{equation}
and 
\begin{equation}
A_{m,n}^{-}:=\int_{1/N-i\infty}^{0}\dots\left|dz\right|.\label{eq:simbol-}
\end{equation}
 From (\ref{eq:stilda-l}) we can see that 
\[
\left|\sum_{\rho_{2}}z^{-\rho_{2}/2}\Gamma\left(\frac{\rho_{2}}{2}\right)\right|=\left|\widetilde{S}_{2}\left(z\right)-\frac{\sqrt{\pi}}{2z^{1/2}}-E_{2}\left(1/N,y\right)\right|\ll N^{1/2}+\frac{1}{\left|z\right|^{1/2}}+\left|E_{2}\left(1/N,y\right)\right|
\]
\begin{equation}
\ll\begin{cases}
N, & \left|y\right|\leq1/N\\
N+\left|z\right|^{1/2}\log^{2}\left(2N\left|y\right|\right), & \left|y\right|>1/N.
\end{cases}\label{eq:cambioeval}
\end{equation}
Let us consider $y\leq0$ and, recalling the notation $\rho_{j}=\beta_{j}+i\gamma_{j}$,
the notation (\ref{eq:simbol-}) and assuming $\gamma_{1}>0$ for
symmetry, we have to study 
\[
A_{4,1}^{-}\ll\sum_{\rho_{1:\,}\gamma_{1}>0}\gamma_{1}^{\beta_{1}-1/2}\left(N\int_{-1/N}^{0}\frac{\exp\left(\gamma_{1}\arctan\left(Ny\right)-\frac{\pi}{2}\gamma_{1}\right)}{\left|z\right|^{k+3/2+\beta_{1}}}\left|dz\right|+\right.
\]
\[
\left.N\int_{-\infty}^{-1/N}\frac{\exp\left(\gamma_{1}\arctan\left(Ny\right)-\frac{\pi}{2}\gamma_{1}\right)}{\left|y\right|^{k+3/2+\beta_{1}}}dy+\int_{-\infty}^{-1/N}\frac{\exp\left(\gamma_{1}\arctan\left(Ny\right)-\frac{\pi}{2}\gamma_{1}\right)\log^{2}\left(2N\left|y\right|\right)}{\left|y\right|^{k+1+\beta_{1}}}dy\right)\ll_{k}N^{k+5/2}
\]
from Lemma 5, assuming that $k>0.$ Note that we have to split the
integral since, from (\ref{eq:modz-1}) and (\ref{eq:cambioeval}),
we have different evaluations if $\left|y\right|\leq1/N$ or $\left|y\right|>1/N.$
Now let us consider $y>0$. Recalling (\ref{eq:simbol+}), we have
to study 
\[
A_{4,1}^{+}\ll N\sum_{\rho_{1}:\,\gamma_{1}>0}\gamma_{1}^{\beta_{1}-1/2}\int_{0}^{1/N}\frac{\exp\left(\gamma_{1}\arctan\left(Ny\right)-\frac{\pi}{2}\gamma_{1}\right)}{\left|z\right|^{k+3/2+\beta_{1}}}\left|dz\right|
\]
\[
+\sum_{\rho_{1}:\,\gamma_{1}>0}\gamma_{1}^{\beta_{1}-1/2}\int_{1/N}^{\infty}\exp\left(\gamma_{1}\arctan\left(Ny\right)-\frac{\pi}{2}\gamma_{1}\right)\frac{N+y^{1/2}\log^{2}\left(2Ny\right)}{y^{k+3/2+\beta_{1}}}dy=\mathcal{A}_{1}+\mathcal{A}_{2}
\]
say, and we have that 
\[
\mathcal{A}_{1}\ll_{k}N^{k+5/2}
\]
from Lemma 5 and 
\[
\mathcal{A}_{2}\ll N^{k+5/2}\sum_{\rho_{1}:\,\gamma_{1}>0}\gamma_{1}^{\beta_{1}-1/2}\int_{1}^{\infty}\frac{\exp\left(-\gamma_{1}\arctan\left(1/u\right)\right)}{u^{k+3/2+\beta_{1}}}du
\]
\[
+N^{k+1}\sum_{\rho_{1}:\,\gamma_{1}>0}\gamma_{1}^{\beta_{1}-1/2}\int_{1}^{\infty}\frac{\exp\left(-\gamma_{1}\arctan\left(1/u\right)\right)\log^{2}\left(2u\right)}{u^{k+1/2+\beta_{1}}}du\ll_{k}N^{k+5/2}
\]
from Lemma 4, assuming $k>1/2$. Now let us consider 
\[
A_{4,2}=\sum_{\rho_{1}}\left|\Gamma\left(\rho_{1}\right)\right|\sum_{\rho_{2}}\left|\Gamma\left(\frac{\rho_{2}}{2}\right)\right|\int_{\left(1/N\right)}\left|e^{Nz}\right|\left|z^{-k-3/2}\right|\left|z^{-\rho_{1}}\right|\left|z^{-\rho_{2}/2}\right|\left|dz\right|.
\]
By symmetry, it suffices to consider only the cases $\gamma_{1},\gamma_{2}>0$
and $\gamma_{1}>0,$ $\gamma_{2}<0$. As in (\ref{eq:simbol+}) and
(\ref{eq:simbol-}) we have to introduce some new notations since
the evaluation depends on the sign of the product $\gamma_{1}\gamma_{2}$
and the sign of $y.$ Hereafter we will use the symbol $B_{m,n}$
when we consider $A_{m,n}$ with the assumption $\gamma_{1},\gamma_{2}>0$
and the symbol $C_{m,n}$ when we consider $A_{m,n}$ with the assumption
$\gamma_{1}>0,$ $\gamma_{2}<0$. Since 
\begin{equation}
\arctan\left(Ny\right)-\frac{\pi}{2}\leq-\frac{\pi}{2}\label{eq:arctanboun1}
\end{equation}
 and recalling (\ref{eq:simbol-}), we have 
\[
B_{4,2}^{-}\ll\sum_{\rho_{1}:\,\gamma_{1}>0}\gamma_{1}^{\beta_{1}-1/2}\exp\left(-\frac{\pi}{2}\gamma_{1}\right)\sum_{\rho_{2}:\,\gamma_{2}>0}\gamma_{2}^{\beta_{2}/2-1/2}\exp\left(-\frac{\pi}{4}\gamma_{2}\right)\left(\int_{-\infty}^{0}\frac{dy}{\left|z\right|^{k+3/2+\beta_{1}+\beta_{2}/2}}\right)
\]
\[
\ll_{k}N^{k+2}\sum_{\rho_{1}:\,\gamma_{1}>0}\gamma_{1}^{\beta_{1}-1/2}\exp\left(-\frac{\pi}{2}\gamma_{1}\right)\sum_{\rho_{2}:\,\gamma_{2}>0}\gamma_{2}^{\beta_{2}/2-1/2}\exp\left(-\frac{\pi}{4}\gamma_{2}\right)\ll_{k}N^{k+2}.
\]
Now let us consider $y>0$.We have 
\[
B_{4,2}^{+}\ll\sum_{\rho_{1}:\,\gamma_{1}>0}\gamma_{1}^{\beta_{1}-1/2}\sum_{\rho_{2}:\,\gamma_{2}>0}\gamma_{2}^{\beta_{2}/2-1/2}\int_{0}^{1/N}\frac{\exp\left(\left(\gamma_{1}+\frac{\gamma_{2}}{2}\right)\left(\arctan\left(Ny\right)-\frac{\pi}{2}\right)\right)}{\left|z\right|^{k+3/2+\beta_{1}+\beta_{2}/2}}dy
\]
\[
+\sum_{\rho_{1}:\,\gamma_{1}>0}\gamma_{1}^{\beta_{1}-1/2}\sum_{\rho_{2}:\,\gamma_{2}>0}\gamma_{2}^{\beta_{2}/2-1/2}\int_{1/N}^{\infty}\frac{\exp\left(\left(\gamma_{1}+\frac{\gamma_{2}}{2}\right)\left(\arctan\left(Ny\right)-\frac{\pi}{2}\right)\right)}{y^{k+3/2+\beta_{1}+\beta_{2}/2}}dy
\]
\[
=\mathcal{A}_{3}+\mathcal{A}_{4},
\]
say. If $y\in\left(0,1/N\right]$ we obviously have $\arctan\left(Ny\right)-\frac{\pi}{2}\leq-\frac{\pi}{4}$
and so
\begin{align*}
\mathcal{A}_{3}\ll_{k} & \sum_{\rho_{1}:\,\gamma_{1}>0}\gamma_{1}^{\beta_{1}-1/2}\exp\left(-\frac{\pi}{4}\gamma_{1}\right)\sum_{\rho_{2}:\,\gamma_{2}>0}\gamma_{2}^{\beta_{2}/2-1/2}\exp\left(-\frac{\pi}{8}\gamma_{2}\right)\int_{0}^{1/N}N^{k+3/2+\beta_{1}+\beta_{2}/2}dy\ll_{k}N^{k+2}
\end{align*}
 For $\mathcal{A}_{4}$ we can see, following the proof of the Lemma
4, that we have 
\begin{align*}
\mathcal{A}_{4}\ll & N^{k+1/2}\sum_{\rho_{1}:\,\gamma_{1}>0}N^{\beta_{1}}\gamma_{1}^{\beta_{1}-1/2}\sum_{\rho_{2}:\,\gamma_{2}>0}N^{\beta_{2}/2}\gamma_{2}^{\beta_{2}/2-1/2}\int_{1}^{\infty}\frac{\exp\left(-\left(\gamma_{1}+\frac{\gamma_{2}}{2}\right)\arctan\left(\frac{1}{u}\right)\right)}{u^{k+3/2+\beta_{1}+\beta_{2}/2}}du\\
\asymp & N^{k+1/2}\sum_{\rho_{1}:\,\gamma_{1}>0}\sum_{\rho_{2}:\,\gamma_{2}>0}N^{\beta_{1}+\beta_{2}/2}\frac{\gamma_{1}^{\beta_{1}-1/2}\gamma_{2}^{\beta_{2}/2-1/2}}{\left(\gamma_{1}+\frac{\gamma_{2}}{2}\right)^{k+1/2+\beta_{1}+\beta_{2}/2}}
\end{align*}
and observing that 
\[
\frac{\gamma_{1}^{\beta_{1}}\gamma_{2}^{\beta_{2}/2}}{2}\leq\left(\gamma_{1}+\frac{\gamma_{2}}{2}\right)^{\beta_{1}+\beta_{2}/2}
\]
we get 
\begin{align*}
\mathcal{A}_{4}\ll_{k} & N^{k+1/2}\sum_{\rho_{1}:\,\gamma_{1}>0}\sum_{\rho_{2}:\,\gamma_{2}>0}N^{\beta_{1}+\beta_{2}/2}\frac{1}{\gamma_{1}^{1/2}\gamma_{2}^{1/2}\left(\gamma_{1}+\frac{\gamma_{2}}{2}\right)^{k+1/2}}\\
\ll_{k} & N^{k+1/2}\sum_{\rho_{1}:\,\gamma_{1}>0}\frac{1}{\gamma_{1}^{k+1}}\sum_{\rho_{2}:\,0<\gamma_{2}\leq\gamma_{1}}\frac{1}{\gamma_{2}^{1/2}}\\
\ll_{k} & N^{k+1/2}\sum_{\rho_{1}:\,\gamma_{1}>0}\frac{\log\left(\gamma_{1}\right)}{\gamma_{1}^{k+1/2}}
\end{align*}
and so we proved the convergence if $k>1/2$ using the Riemann - Von
Mangoldt formula. Let us consider the case $\gamma_{1}>0$, $\gamma_{2}<0$
(and so we will use the symbol $C_{m,n}$) and let $y\leq0$. Using
again (\ref{eq:arctanboun1}) we have to study 
\[
C_{4,2}^{-}\ll\sum_{\rho_{1}:\,\gamma_{1}>0}\gamma_{1}^{\beta_{1}-1/2}\exp\left(-\frac{\pi}{2}\gamma_{1}\right)\sum_{\rho_{2}:\,\gamma_{2}<0}\left|\gamma_{2}\right|^{\beta_{2}/2-1/2}\int_{-\infty}^{0}\frac{\exp\left(\frac{\gamma_{2}\arctan\left(Ny\right)}{2}-\frac{\pi\left|\gamma_{2}\right|}{4}\right)}{\left|z\right|^{k+3/2+\beta_{1}+\beta_{2}/2}}\left|dz\right|
\]
and using Lemma 4, Lemma 5 and the identity $\arctan\left(x\right)+\arctan\left(1/x\right)=-\pi/2,\,x<0$
we have 
\begin{align*}
C_{4,2}^{-}\ll_{k} & N^{k+3}\sum_{\rho_{1}:\,\gamma_{1}>0}\gamma_{1}^{\beta_{1}-1/2}\exp\left(-\frac{\pi}{2}\gamma_{1}\right)\sum_{\rho_{2}:\,\gamma_{2}<0}\left|\gamma_{2}\right|^{\beta_{2}/2-1/2}\exp\left(-\frac{\pi}{8}\left|\gamma_{2}\right|\right)\\
+ & \sum_{\rho_{1}:\,\gamma_{1}>0}\gamma_{1}^{\beta_{1}-1/2}\exp\left(-\frac{\pi}{2}\gamma_{1}\right)\sum_{\rho_{2}:\,\gamma_{2}<0}\left|\gamma_{2}\right|^{\beta_{2}/2-1/2}\int_{-\infty}^{-1/N}\frac{\exp\left(-\frac{\left|\gamma_{2}\right|}{2}\left(\arctan\left(Ny\right)+\frac{\pi}{2}\right)\right)}{\left|y\right|^{k+3/2+\beta_{1}+\beta_{2}/2}}dy\\
\ll_{k} & N^{k+3}+N^{k+2}\sum_{\rho_{1}:\,\gamma_{1}>0}\gamma_{1}^{\beta_{1}-1/2}\exp\left(-\frac{\pi}{2}\gamma_{1}\right)\sum_{\rho_{2}:\,\gamma_{2}<0}\left|\gamma_{2}\right|^{\beta_{2}/2-1/2}\int_{1}^{\infty}\frac{\exp\left(-\frac{\left|\gamma_{2}\right|}{2}\arctan\left(\frac{1}{u}\right)\right)}{u^{k+3/2+\beta_{1}+\beta_{2}/2}}du\ll_{k}N^{k+3}
\end{align*}
for $k>-1/2$. If $y>0$ we have essentially the same situation exchanging
the role of $\gamma_{1}$ and $\gamma_{2}$. So we can switch the
integral with the series and get 
\[
I_{4}=\frac{1}{4\sqrt{\pi}i}\sum_{\rho_{1}}\Gamma\left(\rho_{1}\right)\sum_{\rho_{2}}\Gamma\left(\frac{\rho_{2}}{2}\right)\int_{\left(1/N\right)}e^{Nz}z^{-k-3/2-\rho_{1}-\rho_{2}/2}dz
\]
\[
=\frac{N^{k+1/2}\sqrt{\pi}}{2}\sum_{\rho_{1}}\sum_{\rho_{2}}N^{\rho_{1}+\rho_{2}/2}\frac{\Gamma\left(\rho_{1}\right)\Gamma\left(\frac{\rho_{2}}{2}\right)}{\Gamma\left(k+3/2+\rho_{1}+\rho_{2}/2\right)}.
\]

\section{Evaluation of $I_{5}$}

We have to evaluate 
\[
I_{5}=\frac{1}{8\pi i}\int_{\left(1/N\right)}e^{Nz}z^{-k-2}\sum_{\rho_{1}}z^{-\rho_{1}/2}\Gamma\left(\frac{\rho_{1}}{2}\right)\sum_{\rho_{2}}z^{-\rho_{2}/2}\Gamma\left(\frac{\rho_{2}}{2}\right)dz
\]
and we can see that the argument used in $I_{4}$ works also in this
case since the presence of $\beta_{1}/2$ instead of $\beta_{1}$
does not alter the validity of the proof. So repeating the reasoning
we can obtain the convergence for $k>1/2$ and so 
\[
I_{5}=\frac{1}{8\pi i}\sum_{\rho_{1}}\Gamma\left(\frac{\rho_{1}}{2}\right)\sum_{\rho_{2}}\Gamma\left(\frac{\rho_{2}}{2}\right)\int_{\left(1/N\right)}e^{Nz}z^{-k-2-\rho_{1}/2-\rho_{2}/2}dz
\]
\[
=\frac{N^{k+1}}{4}\sum_{\rho_{1}}\sum_{\rho_{2}}N^{\rho_{1}/2+\rho_{2}/2}\frac{\Gamma\left(\frac{\rho_{1}}{2}\right)\Gamma\left(\frac{\rho_{2}}{2}\right)}{\Gamma\left(k+2+\rho_{1}/2+\rho_{2}/2\right)}
\]
then $I_{4}+I_{5}=M_{3}\left(N,k\right)$.

\section{Evaluation of $I_{6}$}

We have to evaluate 
\[
I_{6}=\frac{1}{8\pi i}\int_{\left(1/N\right)}e^{Nz}z^{-k-1}\sum_{\rho_{1}}z^{-\rho_{1}}\Gamma\left(\rho_{1}\right)\sum_{\rho_{2}}z^{-\rho_{2}/2}\Gamma\left(\frac{\rho_{2}}{2}\right)\sum_{\rho_{3}}z^{-\rho_{3}/2}\Gamma\left(\frac{\rho_{3}}{2}\right)dz.
\]
We want to switch the integral with three series, so we will prove
the absolute convergence of
\[
A_{6,1}:=\sum_{\rho_{1}}\left|\Gamma\left(\rho_{1}\right)\right|\int_{\left(1/N\right)}\left|e^{Nz}\right|\left|z^{-k-1}\right|\left|z^{-\rho_{1}}\right|\left|\sum_{\rho_{2}}z^{-\rho_{2}/2}\Gamma\left(\frac{\rho_{2}}{2}\right)\right|\left|\sum_{\rho_{3}}z^{-\rho_{3}/2}\Gamma\left(\frac{\rho_{3}}{2}\right)\right|\left|dz\right|,
\]
\[
A_{6,2}:=\sum_{\rho_{1}}\left|\Gamma\left(\rho_{1}\right)\right|\sum_{\rho_{2}}\left|\Gamma\left(\frac{\rho_{2}}{2}\right)\right|\int_{\left(1/N\right)}\left|e^{Nz}\right|\left|z^{-k-1}\right|\left|z^{-\rho_{1}}\right|\left|z^{-\rho_{2}/2}\right|\left|\sum_{\rho_{3}}z^{-\rho_{3}/2}\Gamma\left(\frac{\rho_{3}}{2}\right)\right|\left|dz\right|
\]
and
\[
A_{6,3}:=\sum_{\rho_{1}}\left|\Gamma\left(\rho_{1}\right)\right|\sum_{\rho_{2}}\left|\Gamma\left(\frac{\rho_{2}}{2}\right)\right|\sum_{\rho_{3}}\left|\Gamma\left(\frac{\rho_{3}}{2}\right)\right|\int_{\left(1/N\right)}\left|e^{Nz}\right|\left|z^{-k-1}\right|\left|z^{-\rho_{1}}\right|\left|z^{-\rho_{2}/2}\right|\left|z^{-\rho_{3}/2}\right|\left|dz\right|.
\]
Let us consider
\[
A_{6,1}=\sum_{\rho_{1}}\left|\Gamma\left(\rho_{1}\right)\right|\int_{\left(1/N\right)}\left|e^{Nz}\right|\left|z^{-k-1}\right|\left|z^{-\rho_{1}}\right|\left|\sum_{\rho_{2}}z^{-\rho_{2}/2}\Gamma\left(\frac{\rho_{2}}{2}\right)\right|\left|\sum_{\rho_{3}}z^{-\rho_{3}/2}\Gamma\left(\frac{\rho_{3}}{2}\right)\right|\left|dz\right|,
\]
and we assume, by symmetry, that $\gamma_{1}>0$. Let $y\leq0.$ From
(\ref{eq:arctanboun1}) and recalling the notation (\ref{eq:simbol-})
we have that 
\[
A_{6,1}^{-}\ll N^{k+3}\sum_{\rho_{1}:\,\gamma_{1}>0}N^{\beta_{1}}\gamma_{1}^{\beta_{1}-1/2}\exp\left(-\frac{\pi}{2}\gamma_{1}\right)\int_{-1/N}^{0}\exp\left(\gamma_{1}\arctan\left(N\left|y\right|\right)\right)dy
\]
\[
+\sum_{\rho_{1}:\,\gamma_{1}>0}\gamma_{1}^{\beta_{1}-1/2}\exp\left(-\frac{\pi}{2}\gamma_{1}\right)\int_{-\infty}^{-1/N}\left|z\right|^{-k-1-\beta_{1}}\exp\left(\gamma_{1}\arctan\left(N\left|y\right|\right)\right)\left(N+\left|z\right|^{1/2}\log^{2}\left(2N\left|y\right|\right)\right)^{2}dy
\]
which is bounded by
\[
A_{6,1}^{-}\ll N^{k+3}+N^{2}\sum_{\rho_{1}:\,\gamma_{1}>0}\gamma_{1}^{\beta_{1}-1/2}\exp\left(-\frac{\pi}{2}\gamma_{1}\right)\int_{-\infty}^{-1/N}\left|y\right|^{-k-1-\beta_{1}}dy
\]
\[
+2N\sum_{\rho_{1}:\,\gamma_{1}>0}\gamma_{1}^{\beta_{1}-1/2}\exp\left(-\frac{\pi}{2}\gamma_{1}\right)\int_{-\infty}^{-1/N}\left|y\right|^{-k-1/2-\beta_{1}}\log^{2}\left(2N\left|y\right|\right)dy
\]
\[
+\sum_{\rho_{1}:\,\gamma_{1}>0}\gamma_{1}^{\beta_{1}-1/2}\exp\left(-\frac{\pi}{2}\gamma_{1}\right)\int_{-\infty}^{-1/N}\left|y\right|^{-k-\beta_{1}}\log^{4}\left(2N\left|y\right|\right)dy\ll N^{k+3}
\]
for $k>1$. Let $y>0$. We have 
\[
A_{6,1}^{+}\ll N^{2}\sum_{\rho_{1}:\,\gamma_{1}>0}\gamma_{1}^{\beta_{1}-1/2}\int_{0}^{1/N}\exp\left(\gamma_{1}\arctan\left(Ny\right)-\frac{\pi}{2}\gamma_{1}\right)\frac{\left|dz\right|}{\left|z\right|^{k+1+\beta_{1}}}
\]
\[
+\sum_{\rho_{1}:\,\gamma_{1}>0}\gamma_{1}^{\beta_{1}-1/2}\int_{1/N}^{\infty}\left|z\right|^{-k-1-\beta_{1}}\exp\left(\gamma_{1}\arctan\left(Ny\right)-\frac{\pi}{2}\gamma_{1}\right)\left(N+\left|z\right|^{1/2}\log^{2}\left(2N\left|y\right|\right)\right)^{2}dy.
\]
From Lemma 5 we have 
\[
\sum_{\rho_{1}:\,\gamma_{1}>0}\gamma_{1}^{\beta_{1}-1/2}\int_{0}^{1/N}\exp\left(\gamma_{1}\arctan\left(Ny\right)-\frac{\pi}{2}\gamma_{1}\right)\frac{\left|dz\right|}{\left|z\right|^{k+1+\beta_{1}}}\ll_{k}N^{k+1}
\]
for $k>0$ so 
\[
A_{6,1}^{+}\ll N^{k+3}+\sum_{\rho_{1}:\,\gamma_{1}>0}\gamma_{1}^{\beta_{1}-1/2}\int_{1/N}^{\infty}\left|z\right|^{-k-1-\beta_{1}}\exp\left(\gamma_{1}\arctan\left(Ny\right)-\frac{\pi}{2}\gamma_{1}\right)\left(N+\left|z\right|^{1/2}\log\left(2N\left|y\right|\right)\right)^{2}dy
\]

\[
\ll N^{k+3}+N^{2}\sum_{\rho_{1}:\,\gamma_{1}>0}\gamma_{1}^{\beta_{1}-1/2}\int_{1/N}^{\infty}\exp\left(\gamma_{1}\arctan\left(Ny\right)-\frac{\pi}{2}\gamma_{1}\right)y^{-k-1-\beta_{1}}dy
\]
\[
+2N\sum_{\rho_{1}:\,\gamma_{1}>0}\gamma_{1}^{\beta_{1}-1/2}\int_{1/N}^{\infty}\exp\left(\gamma_{1}\arctan\left(Ny\right)-\frac{\pi}{2}\gamma_{1}\right)\frac{\log^{2}\left(2Ny\right)}{y^{k+1/2+\beta_{1}}}dy
\]
\[
+\sum_{\rho_{1}:\,\gamma_{1}>0}\gamma_{1}^{\beta_{1}-1/2}\int_{1/N}^{\infty}\exp\left(\gamma_{1}\arctan\left(Ny\right)-\frac{\pi}{2}\gamma_{1}\right)\frac{\log^{4}\left(2Ny\right)}{y^{k+\beta_{1}}}dy
\]
and using the well known identity 
\begin{equation}
\arctan\left(x\right)-\frac{\pi}{2}=-\arctan\left(\frac{1}{x}\right),\,x>0\label{eq:arctanboun2}
\end{equation}
 and placing $Ny=u$ we get 
\[
A_{6,1}^{+}\ll N^{k+3}+N^{k+2}\sum_{\rho_{1}:\,\gamma_{1}>0}N^{\beta_{1}}\gamma_{1}^{\beta_{1}-1/2}\int_{1}^{\infty}\exp\left(-\gamma_{1}\arctan\left(\frac{1}{u}\right)\right)u^{-k-1-\beta_{1}}dy
\]
\[
+2N^{k+1/2}\sum_{\rho_{1}:\,\gamma_{1}>0}N^{\beta_{1}}\gamma_{1}^{\beta_{1}-1/2}\int_{1}^{\infty}\exp\left(-\gamma_{1}\arctan\left(\frac{1}{u}\right)\right)\frac{\log^{2}\left(2u\right)}{u^{k+1/2+\beta_{1}}}dy
\]
\[
+N^{k-1}\sum_{\rho_{1}:\,\gamma_{1}>0}N^{\beta_{1}}\gamma_{1}^{\beta_{1}-1/2}\int_{1}^{\infty}\exp\left(-\gamma_{1}\arctan\left(\frac{1}{u}\right)\right)\frac{\log^{4}\left(2u\right)}{u^{k+\beta_{1}}}dy\ll_{k}N^{k+3}
\]
from Lemma 4, assuming $k>3/2$. Now we have to study 
\[
A_{6,2}=\sum_{\rho_{1}}\left|\Gamma\left(\rho_{1}\right)\right|\sum_{\rho_{2}}\left|\Gamma\left(\frac{\rho_{2}}{2}\right)\right|\int_{\left(1/N\right)}\left|e^{Nz}\right|\left|z^{-k-1}\right|\left|z^{-\rho_{1}}\right|\left|z^{-\rho_{2}/2}\right|\left|\sum_{\rho_{3}}z^{-\rho_{3}/2}\Gamma\left(\frac{\rho_{3}}{2}\right)\right|\left|dz\right|
\]
and, by symmetry, we can consider the cases $\gamma_{1},\gamma_{2}>0$
or $\gamma_{1}>0$, $\gamma_{2}<0$. Let $\gamma_{1},\gamma_{2}>0$
and $y\leq0$. From (\ref{eq:arctanboun1}) we have 
\[
B_{6,2}^{-}\ll N\sum_{\rho_{1}:\,\gamma_{1}>0}\gamma_{1}^{\beta_{1}-1/2}\exp\left(-\frac{\pi}{2}\gamma_{1}\right)\sum_{\rho_{2}:\,\gamma_{2}>0}\gamma_{2}^{\beta_{2}/2-1/2}\exp\left(-\frac{\pi}{4}\gamma_{2}\right)\int_{-1/N}^{0}\frac{\left|dz\right|}{\left|z\right|^{k+1+\beta_{1}+\beta_{2}/2}}
\]
\[
+\sum_{\rho_{1}:\,\gamma_{1}>0}\gamma_{1}^{\beta_{1}-1/2}\exp\left(-\frac{\pi}{2}\gamma_{1}\right)\sum_{\rho_{2}:\,\gamma_{2}>0}\gamma_{2}^{\beta_{2}/2-1/2}\exp\left(-\frac{\pi}{4}\gamma_{2}\right)\int_{-\infty}^{-1/N}\frac{N+\left|y\right|^{1/2}\log^{2}\left(2N\left|y\right|\right)}{\left|y\right|^{k+1+\beta_{1}+\beta_{2}/2}}d\left|y\right|
\]
\[
\ll N^{k+3}+\sum_{\rho_{1}:\,\gamma_{1}>0}\gamma_{1}^{\beta_{1}-1/2}\exp\left(-\frac{\pi}{2}\gamma_{1}\right)\sum_{\rho_{2}:\,\gamma_{2}>0}\gamma_{2}^{\beta_{2}/2-1/2}\exp\left(-\frac{\pi}{4}\gamma_{2}\right)\int_{-\infty}^{-1/N}\frac{1}{\left|y\right|^{k+1+\beta_{1}+\beta_{2}/2}}d\left|y\right|
\]
\[
+\sum_{\rho_{1}:\,\gamma_{1}>0}\gamma_{1}^{\beta_{1}-1/2}\exp\left(-\frac{\pi}{2}\gamma_{1}\right)\sum_{\rho_{2}:\,\gamma_{2}>0}\gamma_{2}^{\beta_{2}/2-1/2}\exp\left(-\frac{\pi}{4}\gamma_{2}\right)\int_{-\infty}^{-1/N}\frac{\log^{2}\left(2N\left|y\right|\right)}{\left|y\right|^{k+1/2+\beta_{1}+\beta_{2}/2}}d\left|y\right|\ll_{k}N^{k+3}
\]
for $k>1/2$. Let $y>0,$ and so the symbol $B_{m,n}$. We recall
again that we have to split the integral for $y\in\left(0,1/N\right)$
and $y\in\left(1/N,\,\infty\right)$ since, by (\ref{eq:modz-1})
and (\ref{eq:cambioeval}), we have different estimation in these
two set. We have that
\[
B_{6,2}^{+}\ll N\sum_{\rho_{1}:\,\gamma_{1}>0}\gamma_{1}^{\beta_{1}-1/2}\sum_{\rho_{2}:\,\gamma_{2}>0}\gamma_{2}^{\beta_{2}/2-1/2}\int_{0}^{1/N}\exp\left(\left(\gamma_{1}+\frac{\gamma_{2}}{2}\right)\left(\arctan\left(Ny\right)-\frac{\pi}{2}\right)\right)\frac{\left|dz\right|}{\left|z\right|^{k+1+\beta_{1}+\beta_{2}/2}}
\]
\[
+\sum_{\rho_{1}:\,\gamma_{1}>0}\gamma_{1}^{\beta_{1}-1/2}\sum_{\rho_{2}:\,\gamma_{2}>0}\gamma_{2}^{\beta_{2}/2-1/2}\int_{1/N}^{\infty}\exp\left(\left(\gamma_{1}+\frac{\gamma_{2}}{2}\right)\left(\arctan\left(Ny\right)-\frac{\pi}{2}\right)\right)\frac{\left(N+y^{1/2}\log^{2}\left(2Ny\right)\right)}{y^{k+1+\beta_{1}+\beta_{2}/2}}dy
\]
\[
\ll N\sum_{\rho_{1}:\,\gamma_{1}>0}\gamma_{1}^{\beta_{1}-1/2}\exp\left(-\frac{\gamma_{1}}{4}\right)\sum_{\rho_{2}:\,\gamma_{2}>0}\gamma_{2}^{\beta_{2}/2-1/2}\exp\left(-\frac{\gamma_{2}}{8}\right)\int_{0}^{1/N}N^{k+1+\beta_{1}+\beta_{2}/2}dy
\]
\[
\sum_{\rho_{1}:\,\gamma_{1}>0}\gamma_{1}^{\beta_{1}-1/2}\sum_{\rho_{2}:\,\gamma_{2}>0}\gamma_{2}^{\beta_{2}/2-1/2}\int_{1/N}^{\infty}\exp\left(\left(\gamma_{1}+\frac{\gamma_{2}}{2}\right)\left(\arctan\left(Ny\right)-\frac{\pi}{2}\right)\right)\frac{\left(N+y^{1/2}\log^{2}\left(2Ny\right)\right)}{y^{k+1+\beta_{1}+\beta_{2}/2}}dy
\]
which is bounded by
\[
B_{6,2}^{+}\ll N^{k+3}+N\sum_{\rho_{1}:\,\gamma_{1}>0}\gamma_{1}^{\beta_{1}-1/2}\sum_{\rho_{2}:\,\gamma_{2}>0}\gamma_{2}^{\beta_{2}/2-1/2}\int_{1/N}^{\infty}\frac{\exp\left(\left(\gamma_{1}+\frac{\gamma_{2}}{2}\right)\left(\arctan\left(Ny\right)-\frac{\pi}{2}\right)\right)}{y^{k+1+\beta_{1}+\beta_{2}/2}}dy
\]
\[
+\sum_{\rho_{1}:\,\gamma_{1}>0}\gamma_{1}^{\beta_{1}-1/2}\sum_{\rho_{2}:\,\gamma_{2}>0}\gamma_{2}^{\beta_{2}/2-1/2}\int_{1/N}^{\infty}\frac{\exp\left(\left(\gamma_{1}+\frac{\gamma_{2}}{2}\right)\left(\arctan\left(Ny\right)-\frac{\pi}{2}\right)\right)\log^{2}\left(2Ny\right)}{y^{k+1/2+\beta_{1}+\beta_{2}/2}}dy
\]
and again from (\ref{eq:arctanboun2}) and placing $Ny=u$ we get
\[
B_{6,2}^{+}\ll N^{k+3}\sum_{\rho_{1}:\,\gamma_{1}>0}\gamma_{1}^{\beta_{1}-1/2}\sum_{\rho_{2}:\,\gamma_{2}>0}\gamma_{2}^{\beta_{2}/2-1/2}\int_{1}^{\infty}\frac{\exp\left(-\left(\gamma_{1}+\frac{\gamma_{2}}{2}\right)\arctan\left(\frac{1}{u}\right)\right)}{u^{k+1+\beta_{1}+\beta_{2}/2}}du
\]
\[
+N^{k+3/2}\sum_{\rho_{1}:\,\gamma_{1}>0}\gamma_{1}^{\beta_{1}-1/2}\sum_{\rho_{2}:\,\gamma_{2}>0}\gamma_{2}^{\beta_{2}/2-1/2}\int_{1}^{\infty}\frac{\exp\left(-\left(\gamma_{1}+\frac{\gamma_{2}}{2}\right)\arctan\left(\frac{1}{u}\right)\right)\log^{2}\left(2u\right)}{u^{k+1/2+\beta_{1}+\beta_{2}/2}}du
\]
and from the proof of Lemma 4 we have 
\[
B_{6,2}^{+}\ll_{k}N^{k+3}\sum_{\rho_{1}:\,\gamma_{1}>0}\gamma_{1}^{\beta_{1}-1/2}\sum_{\rho_{2}:\,\gamma_{2}>0}\gamma_{2}^{\beta_{2}/2-1/2}\left(\gamma_{1}+\frac{\gamma_{2}}{2}\right)^{-k+1/2-\beta_{1}-\beta_{2}/2}
\]
and observing that 
\[
\gamma_{1}^{\beta_{1}}\left(\frac{\gamma_{2}}{2}\right)^{\beta_{2}/2}\leq\left(\gamma_{1}+\frac{\gamma_{2}}{2}\right)^{\beta_{1}}\left(\gamma_{1}+\frac{\gamma_{2}}{2}\right)^{\beta_{2}/2}=\left(\gamma_{1}+\frac{\gamma_{2}}{2}\right)^{\beta_{1}/2+\beta_{2}/2}
\]
we get 
\[
B_{6,2}^{+}\ll_{k}N^{k+3}\sum_{\rho_{1}:\,\gamma_{1}>0}\sum_{\rho_{2}:\,\gamma_{2}>0}\frac{1}{\gamma_{1}^{1/2}\gamma_{2}^{1/2}\left(\gamma_{1}+\gamma_{2}\right)^{k-1/2}}
\]
\[
\ll_{k}N^{k+3}\sum_{\rho_{1}:\,\gamma_{1}>0}\frac{1}{\gamma_{1}^{k}}\sum_{\rho_{2}:\,0<\gamma_{2}\leq\gamma_{1}}\frac{1}{\gamma_{2}^{1/2}}
\]
\[
\ll_{k}N^{k+3}\sum_{\rho_{1}:\,\gamma_{1}>0}\frac{\log\left(\gamma_{1}\right)}{\gamma_{1}^{k-1/2}}
\]
and so the convergence if $k>3/2$. Let us assume that $\gamma_{1}>0$,
$\gamma_{2}<0$ and $y\le0$. From (\ref{eq:arctanboun1}) we have
\[
C_{6,2}^{-}\ll\sum_{\rho_{1}:\,\gamma_{1}>0}\gamma_{1}^{\beta_{1}-1/2}\exp\left(-\frac{\pi}{2}\gamma_{1}\right)\sum_{\rho_{2}:\,\gamma_{2}<0}\left|\gamma_{2}\right|^{\beta_{2}/2-1/2}\int_{-1/N}^{0}\frac{\exp\left(-\frac{\left|\gamma_{2}\right|}{2}\left(\arctan\left(Ny\right)+\frac{\pi}{2}\right)\right)\left|dz\right|}{\left|z\right|^{k+1+\beta_{1}+\beta_{2}/2}}
\]
\[
+N\sum_{\rho_{1}:\,\gamma_{1}>0}\gamma_{1}^{\beta_{1}-1/2}\exp\left(-\frac{\pi}{2}\gamma_{1}\right)\sum_{\rho_{2}:\,\gamma_{2}<0}\left|\gamma_{2}\right|^{\beta_{2}/2-1/2}\int_{-\infty}^{-1/N}\frac{\exp\left(-\frac{\left|\gamma_{2}\right|}{2}\left(\arctan\left(Ny\right)+\frac{\pi}{2}\right)\right)\left|dz\right|}{\left|z\right|^{k+1+\beta_{1}+\beta_{2}/2}}
\]
\[
+\sum_{\rho_{1}:\,\gamma_{1}>0}\gamma_{1}^{\beta_{1}-1/2}\exp\left(-\frac{\pi}{2}\gamma_{1}\right)\sum_{\rho_{2}:\,\gamma_{2}<0}\left|\gamma_{2}\right|^{\beta_{2}/2-1/2}\int_{-\infty}^{-1/N}\frac{\exp\left(-\frac{\left|\gamma_{2}\right|}{2}\left(\arctan\left(Ny\right)+\frac{\pi}{2}\right)\right)\log^{2}\left(2N\left|y\right|\right)\left|dz\right|}{\left|z\right|^{k+1/2+\beta_{1}+\beta_{2}/2}}
\]
hence
\[
C_{6,2}^{-}\ll N^{k+3/2}\sum_{\rho_{1}:\,\gamma_{1}>0}\gamma_{1}^{\beta_{1}-1/2}\exp\left(-\frac{\pi}{2}\gamma_{1}\right)\sum_{\rho_{2}:\,\gamma_{2}<0}\left|\gamma_{2}\right|^{\beta_{2}/2-1/2}\exp\left(-\frac{\pi}{8}\left|\gamma_{2}\right|\right)
\]
\[
+N^{k+5/2}\sum_{\rho_{1}:\,\gamma_{1}>0}\gamma_{1}^{\beta_{1}-1/2}\exp\left(-\frac{\pi}{2}\gamma_{1}\right)\sum_{\rho_{2}}\left|\gamma_{2}\right|^{\beta_{2}/2-1/2}\int_{1}^{\infty}\frac{\exp\left(-\frac{\left|\gamma_{2}\right|}{2}\arctan\left(\frac{1}{u}\right)\right)}{u^{k+1+\beta_{1}+\beta_{2}/2}}du
\]
\[
+N^{k+2}\sum_{\rho_{1}:\,\gamma_{1}>0}\gamma_{1}^{\beta_{1}-1/2}\exp\left(-\frac{\pi}{2}\gamma_{1}\right)\sum_{\rho_{2}:\,\gamma_{2}<0}\left|\gamma_{2}\right|^{\beta_{2}/2-1/2}\int_{1}^{\infty}\frac{\exp\left(-\frac{\left|\gamma_{2}\right|}{2}\arctan\left(\frac{1}{u}\right)\right)\log^{2}\left(2u\right)}{u^{k+1/2+\beta_{1}+\beta_{2}/2}}du\ll N^{k+5/2}
\]
for $k>1.$ If $y>0$ we have essentially the same calculations exchanging
the role of $\gamma_{1}$ and $\gamma_{2}$. So we have to consider
\[
A_{6,3}=\sum_{\rho_{1}}\left|\Gamma\left(\rho_{1}\right)\right|\sum_{\rho_{2}}\left|\Gamma\left(\frac{\rho_{2}}{2}\right)\right|\sum_{\rho_{3}}\left|\Gamma\left(\frac{\rho_{3}}{2}\right)\right|\int_{\left(1/N\right)}\left|e^{Nz}\right|\left|z^{-k-1}\right|\left|z^{-\rho_{1}}\right|\left|z^{-\rho_{2}/2}\right|\left|z^{-\rho_{3}/2}\right|\left|dz\right|.
\]
It is sufficient to consider the cases $\gamma_{i}>0,\,i=1,2,3$,
$\gamma_{1},\gamma_{2}>0$ and $\gamma_{3}<0$ and lastly $\gamma_{1}>0,$
$\gamma_{2},\gamma_{3}<0$. We will use the symbol $D_{6,3}$ when
we consider $A_{6,3}$ with the assumption $\gamma_{i}>0,\,i=1,2,3$,
the symbol $E_{6,3}$ when we consider $A_{6,3}$ with the assumption
$\gamma_{1},\gamma_{2}>0$ and $\gamma_{3}<0$ and $F_{6,3}$ when
we consider $A_{6,3}$ with the assumption $\gamma_{1}>0,$ $\gamma_{2},\gamma_{3}<0$.
From (\ref{eq:arctanboun1}) we have 
\[
D_{6,3}^{-}\ll\sum_{\rho_{1}:\,\gamma_{1}>0}\gamma_{1}^{\beta_{1}-1/2}\exp\left(-\frac{\pi}{2}\gamma_{1}\right)\sum_{\rho_{2}:\,\gamma_{2}>0}\gamma_{2}^{\beta_{2}/2-1/2}\exp\left(-\frac{\pi}{4}\gamma_{2}\right)\sum_{\rho_{3}:\,\gamma_{3}>0}\gamma_{3}^{\beta_{3}/2-1/2}\exp\left(-\frac{\pi}{4}\gamma_{3}\right)
\]
\[
\cdot\int_{-1/N}^{0}N^{k+1+\beta_{1}+\beta_{2}/2+\beta_{3}/2}dy
\]
\[
+\sum_{\rho_{1}:\,\gamma_{1}>0}\gamma_{1}^{\beta_{1}-1/2}\exp\left(-\frac{\pi}{2}\gamma_{1}\right)\sum_{\rho_{2}:\,\gamma_{2}>0}\gamma_{2}^{\beta_{2}/2-1/2}\exp\left(-\frac{\pi}{4}\gamma_{2}\right)\sum_{\rho_{3}:\,\gamma_{3}>0}\gamma_{3}^{\beta_{3}/2-1/2}\exp\left(-\frac{\pi}{4}\gamma_{3}\right)
\]
\[
\cdot\int_{-\infty}^{-1/N}\left|y\right|^{-k-1-\beta_{1}-\beta_{2}/2-\beta_{3}/2}dy\ll N^{k+2}
\]

for $k>1$. Let $y>0$. We have 
\[
D_{6,3}^{+}\ll\sum_{\rho_{1}:\,\gamma_{1}>0}\gamma_{1}^{\beta_{1}-1/2}\sum_{\rho_{2}:\,\gamma_{2}>0}\gamma_{2}^{\beta_{2}/2-1/2}\sum_{\rho_{3}:\,\gamma_{3}>0}\gamma_{3}^{\beta_{3}/2-1/2}
\]
\[
\cdot\int_{0}^{1/N}\exp\left(\left(\gamma_{1}+\frac{\gamma_{2}}{2}+\frac{\gamma_{3}}{2}\right)\left(\arctan\left(Ny\right)-\frac{\pi}{2}\right)\right)N^{k+1+\beta_{1}+\beta_{2}/2+\beta_{3}/2}dy
\]
\[
+\sum_{\rho_{1}:\,\gamma_{1}>0}\gamma_{1}^{\beta_{1}-1/2}\sum_{\rho_{2}:\,\gamma_{2}>0}\gamma_{2}^{\beta_{2}/2-1/2}\sum_{\rho_{3}:\,\gamma_{3}>0}\gamma_{3}^{\beta_{3}/2-1/2}\int_{1/N}^{\infty}\frac{\exp\left(\left(\gamma_{1}+\frac{\gamma_{2}}{2}+\frac{\gamma_{3}}{2}\right)\left(\arctan\left(Ny\right)-\frac{\pi}{2}\right)\right)}{y^{k+1+\beta_{1}+\beta_{2}/2+\beta_{3}/2}}dy
\]
\[
\ll N^{k+2}\sum_{\rho_{1}:\,\gamma_{1}>0}\gamma_{1}^{\beta_{1}-1/2}\exp\left(-\frac{\pi\gamma_{1}}{4}\right)\sum_{\rho_{2}:\,\gamma_{2}>0}\gamma_{2}^{\beta_{2}/2-1/2}\exp\left(-\frac{\pi\gamma_{2}}{8}\right)\sum_{\rho_{3}:\,\gamma_{3}>0}\gamma_{3}^{\beta_{3}/2-1/2}\exp\left(-\frac{\pi\gamma_{2}}{8}\right)
\]
\[
+N^{k+2}\sum_{\rho_{1}:\,\gamma_{1}>0}\gamma_{1}^{\beta_{1}-1/2}\sum_{\rho_{2}:\,\gamma_{2}>0}\gamma_{2}^{\beta_{2}/2-1/2}\sum_{\rho_{3}:\,\gamma_{3}>0}\gamma_{3}^{\beta_{3}/2-1/2}\int_{1}^{\infty}\frac{\exp\left(-\left(\gamma_{1}+\frac{\gamma_{2}}{2}+\frac{\gamma_{3}}{2}\right)\arctan\left(\frac{1}{u}\right)\right)}{u^{k+1+\beta_{1}+\beta_{2}/2+\beta_{3}/2}}du
\]
and from the proof of the Lemma 4 we get 
\[
D_{6,3}^{+}\ll N^{k+2}+N^{k+2}\sum_{\rho_{1}:\,\gamma_{1}>0}\gamma_{1}^{\beta_{1}-1/2}\sum_{\rho_{2}:\,\gamma_{2}>0}\gamma_{2}^{\beta_{2}/2-1/2}\sum_{\rho_{3}:\,\gamma_{3}>0}\gamma_{3}^{\beta_{3}/2-1/2}\left(\gamma_{1}+\frac{\gamma_{2}}{2}+\frac{\gamma_{3}}{2}\right)^{-k-\beta_{1}-\beta_{2}/2-\beta_{3}/2}
\]
and observing that 
\[
\frac{\gamma_{1}^{\beta_{1}}\gamma_{2}^{\beta_{2}/2}\gamma_{3}^{\beta_{3}/2}}{2}\leq\left(\gamma_{1}+\frac{\gamma_{2}}{2}+\frac{\gamma_{3}}{2}\right)^{\beta_{1}+\beta_{2}/2+\beta_{3}/2}
\]
we get 
\[
D_{6,3}^{+}\ll_{k}N^{k+2}+N^{k+2}\sum_{\rho_{1}:\,\gamma_{1}>0}\sum_{\rho_{2}:\,\gamma_{2}>0}\sum_{\rho_{3}:\,\gamma_{3}>0}\frac{1}{\gamma_{1}^{1/2}\gamma_{2}^{1/2}\gamma_{3}^{1/2}\left(\gamma_{1}+\frac{\gamma_{2}}{2}+\frac{\gamma_{3}}{2}\right)^{k}}
\]
and from AM-GM inequality we get 
\[
D_{6,3}^{+}\ll N^{k+2}+N^{k+2}\sum_{\rho_{1}:\,\gamma_{1}>0}\frac{1}{\gamma_{1}^{k/3+1/2}}\sum_{\rho_{2}:\,\gamma_{2}>0}\frac{1}{\gamma_{2}^{k/3+1/2}}\sum_{\rho_{3}:\,\gamma_{3}>0}\frac{1}{\gamma_{3}^{k/3+1/2}}\ll N^{k+2}
\]
for $k>3/2$. Let $\gamma_{1},\gamma_{2}>0,\,\gamma_{3}<0$ (and so
the symbol $E_{m,n}$) and $y\leq0$. From (\ref{eq:arctanboun1})
we have 
\[
E_{6,3}^{-}\ll\sum_{\rho_{1}:\,\gamma_{1}>0}\gamma_{1}^{\beta_{1}-1/2}\exp\left(-\frac{\pi}{2}\gamma_{1}\right)\sum_{\rho_{2}:\,\gamma_{2}>0}\gamma_{2}^{\beta_{2}/2-1/2}\exp\left(-\frac{\pi}{4}\gamma_{2}\right)\sum_{\rho_{3}:\,\gamma_{3}<0}\left|\gamma_{3}\right|^{\beta_{3}/2-1/2}
\]
\[
\cdot\int_{-1/N}^{0}\frac{\exp\left(-\frac{\left|\gamma_{3}\right|}{2}\left(\arctan\left(Ny\right)+\frac{\pi}{2}\right)\right)}{\left|z\right|^{k+1+\beta_{1}+\beta_{2}/2+\beta_{3}/2}}\left|dz\right|
\]
\[
+\sum_{\rho_{1}:\,\gamma_{1}>0}\gamma_{1}^{\beta_{1}-1/2}\exp\left(-\frac{\pi}{2}\gamma_{1}\right)\sum_{\rho_{2}:\,\gamma_{2}>0}\gamma_{2}^{\beta_{2}/2-1/2}\exp\left(-\frac{\pi}{4}\gamma_{2}\right)\sum_{\rho_{3}:\,\gamma_{3}<0}\left|\gamma_{3}\right|^{\beta_{3}/2-1/2}
\]
\[
\cdot\int_{-\infty}^{-1/N}\frac{\exp\left(-\frac{\left|\gamma_{3}\right|}{2}\left(\arctan\left(Ny\right)+\frac{\pi}{2}\right)\right)}{\left|z\right|^{k+1+\beta_{1}+\beta_{2}/2+\beta_{3}/2}}\left|dz\right|
\]
\[
\ll N^{k+2}+N^{k+2}\sum_{\rho_{1}:\,\gamma_{1}>0}\gamma_{1}^{\beta_{1}-1/2}\exp\left(-\frac{\pi}{2}\gamma_{1}\right)\sum_{\rho_{2}:\,\gamma_{2}>0}\gamma_{2}^{\beta_{2}/2-1/2}\exp\left(-\frac{\pi}{4}\gamma_{2}\right)\sum_{\rho_{3}:\,\gamma_{3}<0}\left|\gamma_{3}\right|^{\beta_{3}/2-1/2}
\]
\[
\cdot\int_{1}^{\infty}\frac{\exp\left(-\frac{\left|\gamma_{3}\right|}{2}\arctan\left(\frac{1}{u}\right)\right)}{u^{k+1+\beta_{1}+\beta_{2}/2+\beta_{3}/2}}du\ll N^{k+2}
\]
from the proof of Lemma 4, for $k>1/2$. If $y>0$ we have essentially
the same calculations exchanging the role of $\gamma_{2}$ and $\gamma_{3}$.
Let $\gamma_{2},\gamma_{2}<0$ , $\gamma_{1}>0$ and $y<0$. Recalling
(\ref{eq:simbol-}) we have 
\[
F_{6,3}^{-}\ll\sum_{\rho_{1}:\,\gamma_{1}>0}\gamma_{1}^{\beta_{1}-1/2}\exp\left(-\frac{\pi}{2}\gamma_{1}\right)\sum_{\rho_{2}:\,\gamma_{2}<0}\left|\gamma_{2}\right|^{\beta_{2}/2-1/2}\sum_{\rho_{3}:\,\gamma_{3}<0}\left|\gamma_{3}\right|^{\beta_{3}/2-1/2}
\]
\[
\cdot\int_{-1/N}^{0}\frac{\exp\left(-\left(\frac{\left|\gamma_{2}\right|+\left|\gamma_{3}\right|}{2}\right)\left(\arctan\left(Ny\right)+\frac{\pi}{2}\right)\right)}{\left|z\right|^{k+1+\beta_{1}+\beta_{2}/2+\beta_{3}/2}}\left|dz\right|
\]
\[
+\sum_{\rho_{1}:\,\gamma_{1}>0}\gamma_{1}^{\beta_{1}-1/2}\exp\left(-\frac{\pi}{2}\gamma_{1}\right)\sum_{\rho_{2}:\,\gamma_{2}<0}\left|\gamma_{2}\right|^{\beta_{2}/2-1/2}\sum_{\rho_{3}:\,\gamma_{3}<0}\left|\gamma_{3}\right|^{\beta_{3}/2-1/2}
\]
\[
\cdot\int_{-\infty}^{-1/N}\frac{\exp\left(-\left(\frac{\left|\gamma_{2}\right|+\left|\gamma_{3}\right|}{2}\right)\left(\arctan\left(Ny\right)+\frac{\pi}{2}\right)\right)}{\left|z\right|^{k+1+\beta_{1}+\beta_{2}/2+\beta_{3}/2}}\left|dz\right|
\]
\[
\ll N^{k+2}+\sum_{\rho_{1}:\,\gamma_{1}>0}\gamma_{1}^{\beta_{1}-1/2}\exp\left(-\frac{\pi}{2}\gamma_{1}\right)\sum_{\rho_{2}:\,\gamma_{2}<0}\left|\gamma_{2}\right|^{\beta_{2}/2-1/2}\sum_{\rho_{3}:\,\gamma_{3}<0}\left|\gamma_{3}\right|^{\beta_{3}/2-1/2}
\]
\[
\cdot\int_{1}^{\infty}\frac{\exp\left(-\left(\frac{\left|\gamma_{2}\right|+\left|\gamma_{3}\right|}{2}\right)\arctan\left(\frac{1}{u}\right)\right)}{u^{k+1+\beta_{1}+\beta_{2}/2+\beta_{3}/2}}du
\]
\[
\ll N^{k+2}\sum_{\rho_{2}:\,\gamma_{2}<0}\sum_{\rho_{3}:\,\gamma_{3}<0}\left|\gamma_{2}\right|^{\beta_{2}/2-1/2}\left|\gamma_{3}\right|^{\beta_{3}/2-1/2}\left(\left|\gamma_{2}\right|+\left|\gamma_{3}\right|\right)^{-k-\beta_{2}/2-\beta_{3}/2}
\]
\[
\ll N^{k+2}\sum_{\rho_{2}:\,\gamma_{2}<0}\left|\gamma_{2}\right|^{-k-1/2}\sum_{\rho_{3}:\,\gamma_{3}<0}\left|\gamma_{3}\right|^{-k-1/2}\ll N^{k+2}
\]
using Lemma 4, for $k>1/2$. Let $y>0$. Observing that $-\left(\frac{\left|\gamma_{2}\right|+\left|\gamma_{3}\right|}{2}\right)\left(\arctan\left(Ny\right)+\frac{\pi}{2}\right)\leq-\left(\frac{\left|\gamma_{2}\right|+\left|\gamma_{3}\right|}{2}\right)\frac{\pi}{2}$
we have 
\[
F_{6,3}^{+}\ll\sum_{\rho_{1}:\,\gamma_{1}>0}\gamma_{1}^{\beta_{1}-1/2}\exp\left(-\frac{\pi}{4}\gamma_{1}\right)\sum_{\rho_{2}:\,\gamma_{2}<0}\left|\gamma_{2}\right|^{\beta_{2}/2-1/2}\exp\left(-\frac{\pi}{8}\left|\gamma_{2}\right|\right)\sum_{\rho_{3}:\,\gamma_{3}<0}\left|\gamma_{3}\right|^{\beta_{3}/2-1/2}\exp\left(-\frac{\pi}{8}\left|\gamma_{3}\right|\right)
\]
\[
\cdot\int_{0}^{1/N}\frac{\left|dz\right|}{\left|z\right|^{k+1+\beta_{1}+\beta_{2}/2+\beta_{3}/2}}
\]
\[
+\sum_{\rho_{1}:\,\gamma_{1}>0}\gamma_{1}^{\beta_{1}-1/2}\sum_{\rho_{2}:\,\gamma_{2}<0}\left|\gamma_{2}\right|^{\beta_{2}/2-1/2}\exp\left(-\frac{\pi}{8}\left|\gamma_{2}\right|\right)\sum_{\rho_{3}:\,\gamma_{3}<0}\left|\gamma_{3}\right|^{\beta_{3}/2-1/2}\exp\left(-\frac{\pi}{8}\left|\gamma_{3}\right|\right)
\]
\[
\cdot\int_{1/N}^{\infty}\frac{\exp\left(\gamma_{1}\left(\arctan\left(Ny\right)-\frac{\pi}{2}\right)\right)}{\left|z\right|^{k+1+\beta_{1}+\beta_{2}/2+\beta_{3}/2}}dy
\]
\[
\ll N^{k+2}\sum_{\rho_{1}:\,\gamma_{1}>0}\gamma_{1}^{\beta_{1}-1/2}\int_{1/N}^{\infty}\frac{\exp\left(-\gamma_{1}\arctan\left(\frac{1}{u}\right)\right)}{\left|z\right|^{k+1+\beta_{1}}}du\ll N^{k+2}
\]
from Lemma 4 for $k>1/2.$ Now we can exchange the integral with the
series and get 
\[
I_{6}=\frac{1}{8\pi i}\sum_{\rho_{1}}\Gamma\left(\rho_{1}\right)\sum_{\rho_{2}}\Gamma\left(\frac{\rho_{2}}{2}\right)\sum_{\rho_{3}}\Gamma\left(\frac{\rho_{3}}{2}\right)\int_{\left(1/N\right)}e^{Nz}z^{-k-1-\rho_{1}-\rho_{2}/2-\rho_{3}/2}dz
\]
\[
=\frac{N^{k}}{4}\sum_{\rho_{1}}\sum_{\rho_{2}}\sum_{\rho_{3}}\frac{N^{\rho_{1}+\rho_{2}/2+\rho_{3}/2}\Gamma\left(\rho_{1}\right)\Gamma\left(\frac{\rho_{2}}{2}\right)\Gamma\left(\frac{\rho_{3}}{2}\right)}{\Gamma\left(k+\rho_{1}+\rho_{2}/2+\rho_{3}/2\right)}
\]
then $I_{6}=M_{4}\left(N,k\right)$.

$\,$

Marco Cantarini

Universit\`a di Ferrara

Dipartimento di Matematica e Informatica

Via Machiavelli, 30 

44121 Ferrara, Italy

email: cantarini\_m@libero.it
\end{document}